%
%
%

\documentclass[11pt,twoside]{amsart}

\usepackage{amssymb}
\usepackage{hyperref}

\textwidth=16.00cm
\oddsidemargin=0.00cm
\evensidemargin=0.00cm
\setlength{\parskip}{3pt}

\numberwithin{equation}{section}
\hyphenation{semi-stable}

\newtheorem{theorem}{Theorem}[section]
\newtheorem{lemma}[theorem]{Lemma}

\newtheorem{conjecture}[theorem]{Conjecture}

\theoremstyle{definition}
\newtheorem{definition}[theorem]{Definition}
\newtheorem{remark}[theorem]{Remark}
\newtheorem{example}[theorem]{Example}

\newcommand{\popo}{\mathbb{P}^1 \times \mathbb{P}^1}

\newcommand{\pr}{\mathbb{P}}


\begin{document}


\title{Fat lines in $\mathbb{P}^3$: powers versus symbolic powers}

\author{Elena Guardo}
\address{Dipartimento di Matematica e Informatica\\
Viale A. Doria, 6 - 95100 - Catania, Italy}
\email{guardo@dmi.unict.it}
\urladdr{http://www.dmi.unict.it/$\sim$guardo/}

\author{Brian Harbourne}
\address{Department of Mathematics \\
University of Nebraska--Lincoln\\
Lincoln, NE 68588-0130, USA }
\email{bharbour@math.unl.edu}
\urladdr{http://www.math.unl.edu/$\sim$bharbourne1/}

\author{Adam Van Tuyl}
\address{Department of Mathematical Sciences \\
Lakehead University \\
Thunder Bay, ON P7B 5E1, Canada}
\email{avantuyl@lakeheadu.ca}
\urladdr{http://flash.lakeheadu.ca/$\sim$avantuyl/}

\keywords{symbolic powers, points, lines, multigraded}
\subjclass[2000]{13F20, 13A15, 14C20}
\date{August 22, 2012}

\begin{abstract}
We study the symbolic and regular powers of
ideals $I$ for a family of special configurations of lines in $\pr^3$.
For this family, we show
that $I^{(m)} = I^m$ for all integers $m$ if and only if
$I^{(3)} = I^3$.  We use these configurations to answer a question
of Huneke that asks whether $I^{(m)} = I^{m}$ for
all $m$ if equality holds when $m$ equals the big height
of the ideal $I$.
\end{abstract}

\maketitle

\section{Introduction}
Let $R = k[x_0,\ldots,x_N] = k[\mathbb{P}^N]$ be a polynomial ring over an
algebraically closed field of characteristic zero.  Starting with the
work of \cite{ELS,HH}, and further refined
by \cite{BCH,BH,BH2,DJ,GHM,GHVT,GHVT2,HaHu,HH2,LS, Mo},
the following
containment question has been of interest:
given a homogeneous ideal $(0) \neq I \subsetneq R$, for what integers
$m$ and $r$ do we have $I^{(m)} \subseteq I^r$?  Here, $I^{(m)}$ denotes
the {\it $m$-th symbolic power} of the ideal $I$
whose definition we now recall. If $I^m = \bigcap_j Q_j$ is the
homogeneous primary decomposition of $I^m$,
then $I^{(m)} = \bigcap_{i}
Q'_i$, where the intersection is over those primary components $Q'_i$ which have
$\sqrt{Q'_i}$ contained in an associated prime ideal of $I$.  From the
definition, we always have $I^m \subseteq I^{(m)}$, but we do not always have
$I^{(m)} \subseteq I^m$. For non-trivial ideals we never have $I^{(m)} \subseteq I^r$
when $m<r$, but by \cite{ELS,HH} we always have $I^{(m)} \subseteq I^r$
when $m\geq rN$;  in fact,
$I^{(me)} \subseteq I^m$ where $e$ is the big height of $I$ (i.e., the height of
the associated prime ideal of $I$ of biggest height).
Consequently, for each fixed $r$, it is of interest to find the smallest
$m\geq r$ with $I^{(m)} \subseteq I^r$.
Apart from some sporadic examples \cite{GHVT,LS},
most of the cases for which this smallest $m$ are known either are ideals
of complete intersections (i.e., ideals $I$ generated by a regular sequence,
in which case $I^{(m)}=I^m$ for all $m\geq 1$ \cite{ZS}) or are ideals
defining zero-dimensional schemes \cite{BH2}.

Thus constructing families of ideals $I$ of positive dimensional schemes which are not
complete intersections but for which we can determine the least $m$ for each $r$ such that
$I^{(m)} \subseteq I^r$ is of particular interest. Our focus will be on
ideals with extremal behavior, in the sense that they
satisfy $I^{(m)} = I^m$ for all $m \geq 1$. Adding to the interest of our results
is that as an application we answer a question raised by C.\ Huneke.\footnote{This question was
posed by Huneke in a talk ``Comparing Powers and Symbolic Powers of Ideals''
that he gave at the University of Nebraska, Lincoln in May 2008.} In
particular, for a homogeneous ideal $I$ of big height
$c$, Huneke asked whether it is true that $I^{(m)} = I^m$ for all
$m \geq 1$ if $I^{(c)} = I^c$. Our results show that the answer in general is no.
The ideals that we look at here are ideals $I$ of special configurations of lines in
$\mathbb{P}^3$. For these configurations, we completely characterize when
$I$ satisfies $I^{(m)} = I^m$ for all $m \geq 1$.

The key idea behind our constructions is to build finite configurations
of lines in $\pr^3$ so that the associated ideals have a bigraded
structure with respect to which the ideals define finite sets of points in $\popo$.
Previous work on points in $\popo$, such as, for example, \cite{GuMaRa,GVT,GVT1,VT2},
then provides us with tools which we can exploit in our study of ideals of lines in $\pr^3$.
We now explain these two points of view.
Let $R =k[\pr^3] = k[x_0,x_1,y_0,y_1]$ and consider the two skew lines
$L_1$ and $L_2$ in $\pr^3$ defined by $I(L_1) = (x_0,x_1)$ and
$I(L_2) = (y_0,y_1)$.   If $B = [0:0:b_0:b_1]$ is a
point on $L_1$ and $A = [a_0:a_1:0:0]$ is a point on $L_2$, then the
line $L$ through $A$ and $B$ has defining ideal $I(L) =
(a_1x_0-a_0x_1,b_1y_0-b_0y_1)$.  We can regard $R$ as being
$k[\popo]$, and thus endowed with an
$\mathbb{N}^2$-graded structure by setting $\deg x_i = (1,0)$ and
$\deg y_i = (0,1)$.
With respect to this grading, $I(L)$ is a bigraded ideal which
defines the point $(P,Q)\in\popo$ where $P=[a_0:a_1]$ and $Q=[b_0:b_1]$
and thus $I(L)=I((P,Q))$.
Our configurations will be finite unions of such
lines, i.e., each line $L$ in our configuration meets both the
lines $L_1$ and $L_2$.  This allows us to reinterpret our union of
lines in $\mathbb{P}^3$ as a finite set of points $X$ in $\popo$.
Conversely, the ideal of every finite set of points in $\popo$
defines a finite union of lines in $\pr^3$ where every line intersects $L_1$ and $L_2$.
In addition, we will require that our configurations be arithmetically
Cohen-Macaulay (ACM), i.e., that their associated coordinate rings be
Cohen-Macaulay. Expressed in the language of points in $\popo$,
our main result is:

\begin{theorem}\label{ACMresult}
Let $I = I(X)$ be the ideal of a finite reduced ACM subscheme $X$ in
$\popo$.  Then $I^{(m)}=I^m$ for all $m\geq1$ if and only if
$I^{(3)}=I^3$.
\end{theorem}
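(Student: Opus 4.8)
The implication $\Rightarrow$ is immediate, since $I^{(m)}=I^m$ for all $m$ gives $I^{(3)}=I^3$; so I concentrate on the converse. First I would put $X$ in normal form. By the structure theory of ACM finite point sets in $\popo$ (see \cite{GuMaRa,GVT}), after a change of coordinates on the two $\pr^1$ factors we may assume $X$ is a \emph{staircase}: there are distinct points $A_1,\dots,A_a$, $B_1,\dots,B_{\alpha_1}$ of $\pr^1$ and integers $\alpha_1\ge\alpha_2\ge\cdots\ge\alpha_a\ge1$ with $X=\{(A_i,B_j)\mid 1\le i\le a,\ 1\le j\le\alpha_i\}$. Write $\Lambda_i$ for the degree-$(1,0)$ form vanishing at $A_i$, $M_j$ for the degree-$(0,1)$ form vanishing at $B_j$, set $\mathcal M_i=M_1\cdots M_{\alpha_i}$, and let $J_i=(\Lambda_i,\mathcal M_i)$ be the ideal of the $i$-th column, so that $I=I(X)=\bigcap_{i=1}^aJ_i$. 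If the $\alpha_i$ are all equal, $X$ is a complete intersection and $I^{(m)}=I^m$ for all $m$ by \cite{ZS}; so I may assume the staircase is not rectangular.

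Next I would record two explicit descriptions, one of each side. Because $\Lambda_i,\mathcal M_i$ is a regular sequence, $(\Lambda_i,\mathcal M_i)^m=\bigcap_{p+q=m+1}(\Lambda_i^p,\mathcal M_i^q)$ (over $p,q\ge1$), and since the $M_j$ dividing $\mathcal M_i$ are pairwise coprime this equals $\bigcap_{j\le\alpha_i}(\Lambda_i,M_j)^m$; intersecting over $i$ shows that the $m$-th symbolic power of $X$ is the intersection of the ordinary $m$-th powers of the column ideals, $I^{(m)}=\bigcap_{i=1}^aJ_i^m=\bigcap_i\bigcap_{p+q=m+1}(\Lambda_i^p,\mathcal M_i^q)$, a completely explicit ideal. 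On the ordinary side, $I$ is generated by the \emph{staircase products} $\Lambda_1\cdots\Lambda_i\cdot\mathcal M_{i+1}$ for $0\le i\le a$ (with $\mathcal M_{a+1}=1$), so $I^m$ is generated by the length-$m$ products of these. Since $I^m\subseteq I^{(m)}$ always, the problem becomes: does every element of $\bigcap_iJ_i^m$ lie, bidegree by bidegree, in the ideal generated by the length-$m$ staircase products? The subtlety throughout is that $I^m$, unlike $I^{(m)}$, generally has embedded associated primes.

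The converse would then be proved in two parts. First, the lemma that $I^{(2)}=I^2$ for \emph{every} ACM $X$; this should be a finite check in the low bidegrees using the descriptions above, and it is what makes the answer to Huneke's question negative, since the big height of $I$ here equals $2$. Second --- and this is the crux --- that $I^{(3)}=I^3$ already forces $I^{(m)}=I^m$ for all $m$. I expect this to go by showing that the hypothesis $I^{(3)}=I^3$ imposes a sharp combinatorial restriction on the staircase $(\alpha_1,\dots,\alpha_a)$, and that for staircases meeting that restriction one has $I^{(m+1)}=I\cdot I^{(m)}$ for all $m\ge2$ (a bidegree comparison), whence $I^{(m)}=I^{m-2}I^{(2)}=I^{m-2}I^2=I^m$; equivalently, one is showing that under the hypothesis the symbolic Rees algebra $\bigoplus_mI^{(m)}t^m$ is generated as an $R$-algebra in degrees $\le3$. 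Together with the trivial case $m=1$ and the lemma for $m=2$, this gives $I^{(m)}=I^m$ for all $m$.

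The main obstacle is exactly the step $I^{(3)}=I^3\Rightarrow I^{(m)}=I^m$: one must account for the $m=3$ threshold, namely that the comparison really does fail at $m=3$ for certain non-rectangular staircases (precisely those used to answer Huneke's question) yet cannot first fail at any $m\ge4$ once it holds at $m=3$. Pinning down the exact combinatorial condition on the $\alpha_i$ equivalent to $I^{(m)}=I^m$, and proving that this condition for all $m$ follows from its $m=3$ instance --- because the remaining constraints are either automatic for $m\le2$ or eventually independent of $m$ --- is where the real work lies. The accompanying bidegree-by-bidegree bookkeeping is routine in spirit but delicate because of the embedded components of $I^m$.
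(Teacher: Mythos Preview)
Your proposal shares one ingredient with the paper's proof---the lemma that $I^{(2)}=I^2$ for every finite reduced ACM $X\subset\popo$---but the crucial converse step is handled in a completely different way, and the route you outline runs into a genuine obstacle.

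The paper's argument for the converse is short and does \emph{not} use any combinatorial classification of staircases. Instead it localizes at the irrelevant ideal and invokes a general result of Morey \cite[Theorem~3.2]{Mo}: if $J$ is a perfect codimension-$2$ ideal in a Cohen--Macaulay local ring $R_M$, with $R_M/J$ Cohen--Macaulay and $J$ generically a complete intersection, then $J^{(m)}=J^m$ for all $m$ if and only if it holds for $1\le m\le\dim R_M-1$. Here $\dim R_M=4$, so one only needs $m=2$ and $m=3$; the $m=2$ case is the lemma (Theorem~\ref{ACMsqrsThm}), and the theorem follows immediately. No knowledge of \emph{which} staircases satisfy $I^{(3)}=I^3$ is ever needed.

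Your plan, by contrast, asks for exactly that knowledge: you propose to extract from $I^{(3)}=I^3$ a ``sharp combinatorial restriction'' on $(\alpha_1,\dots,\alpha_a)$ and then verify $I^{(m+1)}=I\cdot I^{(m)}$ for all staircases meeting it. But the paper formulates precisely this combinatorial characterization as Conjecture~\ref{conjecture} (namely, that $I^{(3)}=I^3$ iff the partition has at most two distinct parts) and proves only special cases (Theorems~\ref{examplesofI^3} and~\ref{nonacmexample}). So the step you flag as ``where the real work lies'' is, in the paper's own assessment, an open problem. Your outline is therefore not a proof of Theorem~\ref{ACMresult}; it is a programme whose completion would simultaneously prove Theorem~\ref{ACMresult} \emph{and} resolve Conjecture~\ref{conjecture}. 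The key idea you are missing is that Morey's theorem lets one bypass the classification entirely.
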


To prove Theorem \ref{ACMresult}, we use a result of Morey \cite{Mo}
to first show that $I^{(m)} = I^m$ for all $m \geq 1$ if and
only if equality holds for $m=2$ and $3$.
We use results of the first and third author \cite{GVT1}
to show that $I^{(2)} = I^2$ always holds for ideals of a finite reduced
ACM subscheme in $\popo$, and thus $I^{(m)} = I^m$ for all $m \geq 1$ if and only if
$I^{(3)} = I^3$.
We note that $I^{(3)} \neq I^3$ and $I^{(3)} = I^3$
both occur; see Example \ref{sixptsexample} and Theorem \ref{nonacmexample}
for examples of the former, and Example \ref{3acm}
and Theorem \ref{examplesofI^3} for examples of the latter.
Note that the examples with $I^{(3)} \neq I^3$ give a negative answer to Huneke's question.
To see why, note that the ideal $I$ is an unmixed ideal of height two
(in particular, its big height is two) that has $I^{(2)} = I^2$,
but fails to have $I^{(m)} \neq I^m$ for all $m \geq 1$
since it fails for $m=3$.

In section 2 we present the background needed for the proof
of Theorem \ref{ACMresult} in section 3.
The last section presents a conjecture, along with some
evidence, for a geometric description of all finite reduced ACM
subschemes $X$ in $\popo$ with $I(X)^{(3)} = I(X)^3$.

{\bf Acknowledgements.} Some of the work on this paper was carried out
during visits by the first author at the University of Nebraska and during visits
by the second and third authors at the Universit\`a di
Catania; all of the authors thank both institutions for their hospitality.
The second author's work on this project
was sponsored by the National Security Agency under Grant/Cooperative
agreement ``Advances on Fat Points and Symbolic Powers,'' Number H98230-11-1-0139.
The United States Government is authorized to reproduce and distribute reprints
notwithstanding any copyright notice.
The third author acknowledges the financial support of NSERC and GNSAGA.


\section{Background}

To any nonempty finite set $X\subset\popo$ of points
we associate a set $S_X$ of integer lattice points
indicating which points lie on the same horizontal or vertical rule.
The idea is to enumerate the horizontal and vertical rules
whose intersection with $X$
is non-empty. We thus obtain, say, $H_1,\ldots,H_h$ and $V_1,\ldots,V_v$
where $X\subset \bigcup H_i$ and $X\subset \bigcup V_j$, and $S_X$
consists of all pairs $(i,j)$ such that $X \cap H_i\cap V_j\neq\varnothing$.
We also associate to $X$ its bi-homogeneous ideal
$I(X)=\bigcap_{(P,Q)\in X}I((P,Q))\subset R = k[x_0,x_1,y_0,y_1]$ and its coordinate
ring $R/I(X)$.

As is usual, we say that a subscheme $X \subseteq \popo$ is
arithmetically Cohen-Macaulay (ACM)
if its coordinate ring $R/I(X)$ is Cohen-Macaulay.
It is important to emphasize that while the coordinate rings
of zero-dimensional subschemes in $\mathbb{P}^n$ are always
Cohen-Macaulay, coordinate rings of zero-dimensional subschemes
in a multiprojective space need not be Cohen-Macaulay.
As an example of this, take two distinct points $P$
and $Q$ in $\pr^1$.  Then the coordinate ring $R/I(X)$ of
$X = \{(P,P), (Q,Q)\} \subseteq \popo$ is not Cohen-Macaulay. This can be seen
from the fact that if we consider only the graded structure of
$R/I(X)$, then $X$ is the union of two skew lines in
$\mathbb{P}^3$ which is well known not to be Cohen-Macaulay.
Reduced zero-dimensional
ACM subschemes of $\popo$ can be characterized
in terms of their Hilbert functions \cite{GuMaRa}. We recall
an alternative geometric characterization found in \cite[Theorem 4.3]{GVTcollect}:

\begin{theorem}\label{ACMcharacterize}
Let $X$ be the reduced subscheme consisting of a finite set of points in $\popo$.  Then
$X$ is ACM if and only if whenever $(P_1,Q_1)$ and $(P_2,Q_2)$ are
points in $X$ with $P_1 \neq P_2$ and $Q_1 \neq Q_2$, then either
$(P_1,Q_2)$ or $(P_2,Q_1)$ (or both) also belongs to $X$.
\end{theorem}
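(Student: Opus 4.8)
\emph{A proof strategy.}

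The plan is to replace the combinatorial hypothesis by a cleaner equivalent and then argue the two directions separately. For $P\in\pr^1$ occurring as a first coordinate of a point of $X$, write $X_P=\{Q\in\pr^1\mid(P,Q)\in X\}$. The first step is to note that the condition in the statement is equivalent to requiring that the fibers $\{X_P\}$ be totally ordered under inclusion (and hence, by symmetry, that the vertical fibers be totally ordered as well): if $X_{P_1}$ and $X_{P_2}$ were incomparable, choosing $Q_1\in X_{P_1}\setminus X_{P_2}$ and $Q_2\in X_{P_2}\setminus X_{P_1}$ would produce a forbidden pair $(P_1,Q_1),(P_2,Q_2)\in X$ with $(P_1,Q_2),(P_2,Q_1)\notin X$; conversely, a chain of fibers at once supplies one of the two ``missing'' points for any such pair. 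So it is enough to prove: $X$ is ACM if and only if, after relabelling, $X=\bigcup_{i=1}^h\{P_i\}\times X_{P_i}$ with $X_{P_1}\supseteq\cdots\supseteq X_{P_h}$ --- what I will call a \emph{staircase}.

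For ``staircase $\Rightarrow$ ACM'' I would build $X$ one row at a time, starting from the smallest fiber. Put $Y_j=\bigcup_{i=h-j+1}^h\{P_i\}\times X_{P_i}$, so that $Y_1$ sits on the single rule $\{P_h\}\times\pr^1$ (whence $I(Y_1)$ is a complete intersection) and $Y_h=X$; passing from $Y_j$ to $Y_{j+1}$ adjoins the row over $P_{h-j}$, whose fiber contains every fiber already present. Let $L$ be the degree $(1,0)$ form cutting out $\{P_{h-j}\}\times\pr^1$ and $G$ the product of the degree $(0,1)$ forms vanishing at the points of $X_{P_{h-j}}$; then $G\in I(Y_j)$ (as $X_{P_{h-j}}$ contains the fiber of $Y_j$ over each remaining point) and $L$ is a nonzerodivisor on $R/I(Y_j)$ (as $P_{h-j}$ is a new first coordinate), and a short argument (an element of $I(Y_{j+1})$ vanishes on $\{P_{h-j}\}\times X_{P_{h-j}}$, hence restricts on that rule to a multiple of $G$) shows $I(Y_{j+1})=L\cdot I(Y_j)+(G)$ and $(L\cdot I(Y_j)):G=(L)$. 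These identities give the short exact sequence
\[
0\longrightarrow (R/(L))(-\deg G)\xrightarrow{\ \cdot G\ } R/(L\cdot I(Y_j))\longrightarrow R/I(Y_{j+1})\longrightarrow 0,
\]
in which $R/(L)$ is Cohen--Macaulay of dimension $3$, while $R/(L\cdot I(Y_j))$ has depth at least $2$ once $R/I(Y_j)$ is Cohen--Macaulay; the depth lemma then forces $R/I(Y_{j+1})$ to have depth $2$, hence to be Cohen--Macaulay. This basic double link plus an induction on $j$ gives the implication.

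For ``ACM $\Rightarrow$ staircase'' I would pass to Hilbert functions. If $R/I(X)$ is Cohen--Macaulay it has depth $2$, so a general degree $(1,0)$ form $L$ (always a nonzerodivisor on the reduced ring $R/I(X)$) followed by a general degree $(0,1)$ form (a nonzerodivisor on the $1$-dimensional Cohen--Macaulay ring $R/(I(X),L)$) cuts $R/I(X)$ down to an Artinian quotient whose Hilbert function is the iterated difference $H_X(i,j)-H_X(i-1,j)-H_X(i,j-1)+H_X(i-1,j-1)$; in particular this difference is $\geq0$ for every $(i,j)$. I would then invoke the Giuffrida--Maggioni--Ragusa description of the Hilbert function of an ACM set of points in $\popo$ \cite{GuMaRa} (equivalently, that $R/I(X)$ is then a free $k[L,M]$-module with an explicit basis) to conclude that such nonnegativity is possible only when the fibers $\{X_P\}$ form a chain: given a would-be forbidden pair $(P_a,Q_1),(P_b,Q_2)\in X$ with $(P_a,Q_2),(P_b,Q_1)\notin X$, one pinpoints a bidegree at which the iterated difference is negative --- just as for the two skew lines $X=\{(P,P),(Q,Q)\}$, where its value in bidegree $(1,1)$ is $-1$.

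The main obstacle is this last step: extracting ``the fibers are nested'' from ``the iterated difference is $\geq0$'' amounts to controlling $H_X$ in the bidegrees around the forbidden pair, i.e.\ reproving the part of \cite{GuMaRa} one needs. A route that would sidestep it is to prove first that the restriction of an ACM set of points to any prescribed set of first coordinates (and, dually, of second coordinates) is again ACM; then $(\Rightarrow)$ reduces to configurations supported on two rules in each direction, where the two-skew-lines computation settles the matter. Deciding which of these is cleanest, and supplying the estimates, is where the real work lies.
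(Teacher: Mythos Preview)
The paper does not prove this theorem at all: it is simply quoted as \cite[Theorem~4.3]{GVTcollect} and used as a black box, so there is no in-paper argument to compare your strategy against.

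Judged on its own, your reformulation in terms of nested fibers is correct, and your ``staircase $\Rightarrow$ ACM'' direction via iterated basic double links is sound. The identities $I(Y_{j+1})=L\cdot I(Y_j)+(G)$ and $(L\cdot I(Y_j)):G=(L)$ hold for exactly the reasons you sketch (using that $L$ and $G$ are coprime in the UFD $R$ and that $G\in I(Y_j)$), and the two short exact sequences
\[
0\to R/I(Y_j)\xrightarrow{\ \cdot L\ }R/(L\cdot I(Y_j))\to R/(L)\to 0,
\qquad
0\to R/(L)\xrightarrow{\ \cdot G\ }R/(L\cdot I(Y_j))\to R/I(Y_{j+1})\to 0
\]
combined with the depth lemma give $\operatorname{depth}R/I(Y_{j+1})\ge\min(3-1,2)=2$, so Cohen--Macaulayness propagates. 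This is essentially the mechanism behind Lemma~\ref{gensofideal}.

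For ``ACM $\Rightarrow$ staircase'' you are honest that a key step is missing, and that is accurate. The Hilbert-function route genuinely does reduce to the content of \cite{GuMaRa}: one must show that nonnegativity of the double difference forces the fibers to nest, which is where the work in that paper lies. Your alternative route (restriction to a prescribed pair of rows preserves ACM, then reduce to the $2\times2$ picture) is appealing but is not free either: from the exact sequence $0\to R/I(X')\xrightarrow{\cdot F}R/I(X)\to R/(I(X),F)\to 0$ with $I(X')=I(X):F$, the depth lemma only yields $\operatorname{depth}R/I(X')\ge\min\bigl(2,\operatorname{depth}R/(I(X),F)+1\bigr)$, so you would still need $\operatorname{depth}R/(I(X),F)\ge1$, and since $F$ is a zerodivisor on $R/I(X)$ this is not automatic. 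In short, both of your proposed completions of the reverse implication require a substantive lemma you have not yet supplied; the gap you flagged is real, not merely cosmetic.
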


When a finite reduced subscheme $X \subseteq \popo$ is ACM,
Theorem \ref{ACMcharacterize} implies that  we can relabel the $H_i$'s and $V_j$'s
so that $S_X$ resembles the Ferrers diagram of a partition
$\lambda = (\lambda_1,\ldots,\lambda_h)$ with $\sum \lambda_i = |X|$ and
 $\lambda_i \geq \lambda_{i+1}$ for all $1\leq i<h$, where $\lambda_i$ equals the number of points
on the rule $H_i$ for each $i$.

\begin{example}\label{renumber}
Any finite reduced subscheme $X\subset\popo$ whose
diagram $S_X$ is as in Figure 1 is ACM.
This is because after relabelling the $H_i$'s and $V_j$'s, as in Figure 2,
$S_X$ becomes the Ferrers diagram
for the partition $\lambda = (6,5,3,1,1)$ (where
the entries of $\lambda$ count the number of
points on each $H_i$, arranged to be non-increasing).
\vskip\baselineskip

\hspace{.25in}\hbox to 3in{
\begin{picture}(150,95)(25,-10)
\put(-30,30){Figure 1}
\put(60,-10){\line(0,1){90}}
\put(80,-10){\line(0,1){90}} \put(100,-10){\line(0,1){90}}
\put(120,-10){\line(0,1){90}} \put(140,-10){\line(0,1){90}}
\put(160,-10){\line(0,1){90}}

\put(54,85){$V_1$}
\put(74,85){$V_2$}
\put(94,85){$V_3$}
\put(114,85){$V_4$}
\put(134,85){$V_5$}
\put(154,85){$V_6$}

\put(55,-5){\line(1,0){115}}
\put(55,15){\line(1,0){115}} \put(55,35){\line(1,0){115}}
\put(55,55){\line(1,0){115}} \put(55,75){\line(1,0){115}}

\put(35,-11){$H_5$}
\put(35,11){$H_4$}
\put(35,31){$H_3$}
\put(35,51){$H_2$}
\put(35,71){$H_1$}

\put(60,35){\circle*{5}}
\put(120,-5){\circle*{5}}

\put(80,35){\circle*{5}} \put(80,55){\circle*{5}}
\put(80,75){\circle*{5}}

\put(100,35){\circle*{5}} \put(100,55){\circle*{5}}

\put(120,15){\circle*{5}} \put(120,35){\circle*{5}}
\put(120,55){\circle*{5}} \put(120,75){\circle*{5}}

\put(140,55){\circle*{5}} \put(140,35){\circle*{5}}

\put(160,35){\circle*{5}} \put(160,55){\circle*{5}}
\put(160,75){\circle*{5}}
\end{picture}
}

\vskip-1.95in\vskip2\baselineskip
\hspace{3.75in}\hbox to 3in{
\begin{picture}(150,110)(25,-10)
\put(-30,30){Figure 2}
\put(60,-10){\line(0,1){90}}
\put(80,-10){\line(0,1){90}} \put(100,-10){\line(0,1){90}}
\put(120,-10){\line(0,1){90}} \put(140,-10){\line(0,1){90}}
\put(160,-10){\line(0,1){90}}

\put(54,90){$V_1$}
\put(74,90){$V_2$}
\put(94,90){$V_3$}
\put(114,90){$V_4$}
\put(134,90){$V_5$}
\put(154,90){$V_6$}

\put(55,-5){\line(1,0){115}}
\put(55,15){\line(1,0){115}} \put(55,35){\line(1,0){115}}
\put(55,55){\line(1,0){115}} \put(55,75){\line(1,0){115}}

\put(35,-9){$H_5$}
\put(35,11){$H_4$}
\put(35,31){$H_3$}
\put(35,51){$H_2$}
\put(35,71){$H_1$}

\put(60,-5){\circle*{5}} \put(63,-2){$$}
\put(60,15){\circle*{5}} \put(63,18){$$}
\put(60,35){\circle*{5}}
\put(63,38){$$} \put(60,55){\circle*{5}} \put(63,58){$$}
\put(60,75){\circle*{5}} \put(63,78){$$}

\put(80,55){\circle*{5}} \put(83,58){$$} \put(80,35){\circle*{5}}
\put(83,38){$$} \put(80,75){\circle*{5}} \put(83,78){$$}

\put(100,35){\circle*{5}} \put(103,38){$$}
\put(100,55){\circle*{5}} \put(103,58){$$}
\put(100,75){\circle*{5}} \put(103,78){$$}

\put(120,75){\circle*{5}} \put(123,78){$$}
\put(120,55){\circle*{5}} \put(123,58){$$}

\put(140,55){\circle*{5}} \put(143,58){$$}
\put(140,75){\circle*{5}} \put(143,78){$$}

\put(160,75){\circle*{5}} \put(163,78){$$}
\end{picture}
}
\end{example}

When $X \subseteq \popo$ is a finite reduced ACM subscheme, then the
generators of $I(Z)$ can be described in terms of the partition
$\lambda$.  Note that $H_i$ and $V_j$ are divisors on $\popo$,
defined by forms in $k[x_0,x_1,y_0,y_1]$ of degrees $(1,0)$ and $(0,1)$, respectively.
In the sequel, we shall abuse notation and let
$H_i$ and $V_j$ denote both the divisor and the form that defines it.

\begin{lemma} [{\cite[Theorem 5.1]{VT2}}]\label{gensofideal}
Let $X \subseteq \popo$ be a finite reduced ACM subscheme
with associated partition
$\lambda = (\lambda_1,\ldots,\lambda_h)$.  Let $H_1,\ldots,H_h$
denote the associated horizontal rules and $V_1,\ldots,V_v$ denote the
associated vertical rules which minimally contain $X$
(i.e., $H_i\cap X\neq\varnothing$ and $V_j\cap X\neq\varnothing$ for all $i$ and $j$).
Then a minimal homogeneous set of generators of $I(X)$ is given by
\[\{H_1\cdots H_h,V_1\cdots V_v\} \cup \{H_1\cdots H_{i}V_1 \cdots V_{\lambda_{i+1}} ~|~
\lambda_{i+1}-\lambda_i < 0 \}.\]
\end{lemma}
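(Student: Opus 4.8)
The plan is to let $J$ be the ideal generated by the displayed forms and to prove $J=I(X)$ together with minimality. The easy inclusion $J\subseteq I(X)$ I would verify directly: after the relabeling supplied by Theorem~\ref{ACMcharacterize}, the points of $X$ on a horizontal rule $H_a$ are exactly those on $V_1,\dots,V_{\lambda_a}$, so $H_1\cdots H_h$ and $V_1\cdots V_v$ vanish on $X$. For a corner index $i$ (meaning $\lambda_{i+1}<\lambda_i$) and a point $(P,Q)\in X$ on $H_a$: if $a\le i$ then $H_1\cdots H_i$ kills $(P,Q)$, and if $a\ge i+1$ then, as $\lambda$ is nonincreasing, $(P,Q)$ lies on some $V_b$ with $b\le\lambda_a\le\lambda_{i+1}$, so $V_1\cdots V_{\lambda_{i+1}}$ kills $(P,Q)$. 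Hence each displayed form lies in $I(X)$.

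For the reverse inclusion $I(X)\subseteq J$ I would induct on $h$, peeling off the longest rule $H_1$. The base case $h=1$ is a complete intersection: $X$ is $v$ points on the line $H_1$, $I(X)=(H_1,V_1\cdots V_v)$, which is the displayed list (there is no corner term). For $h\ge 2$, put $X':=X\setminus H_1$; by Theorem~\ref{ACMcharacterize} this is again a reduced ACM subscheme, with partition $(\lambda_2,\dots,\lambda_h)$ and vertical rules $V_1,\dots,V_{\lambda_2}$. Given $F\in I(X)$ of bidegree $(i,j)$, its restriction to $H_1$ is a degree-$j$ binary form in the $y$'s vanishing at the $\lambda_1=v$ distinct points of $X\cap H_1$; if $j\ge v$ this restriction is a multiple of $V_1\cdots V_v$, so after subtracting a suitable multiple of $V_1\cdots V_v\in J$ I may assume $F$ vanishes identically on $H_1$ (when $j<v$ it does so already). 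Then $F=H_1F_1$, and since $H_1$ does not vanish at any point of $X'$ while $V_1\cdots V_v$ vanishes on all of $X'$, I get $F_1\in I(X')$. By the inductive hypothesis $F_1$ is an $R$-linear combination of the displayed generators of $I(X')$, and multiplying each of those by $H_1$ gives, case by case, either a displayed generator of $I(X)$ or $H_1$ times one; hence $F\in J$ and $I(X)=J$. The hardest part of this step is the combinatorial bookkeeping: one checks that a corner at position $i$ of $X'$ corresponds to the corner at position $i+1$ of $X$, that $H_1$ times the generator $V_1\cdots V_{\lambda_2}$ of $I(X')$ is the $i=1$ corner generator of $I(X)$ when $\lambda_2<\lambda_1$ and a multiple of $V_1\cdots V_v$ when $\lambda_2=\lambda_1$, and that $X'$ really has the stated partition and vertical rules.

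Finally I would prove minimality from the observation that the bidegrees of the displayed generators are pairwise incomparable in the componentwise order on $\N^2$: $H_1\cdots H_h$ has bidegree $(h,0)$, $V_1\cdots V_v$ has bidegree $(0,v)$, and the corner generator at $i$ has bidegree $(i,\lambda_{i+1})$ with $1\le i\le h-1$ and $\lambda_{i+1}\ge 1$; for corner indices $i<i'$ one has $i<i'$ but $\lambda_{i+1}\ge\lambda_{i'}>\lambda_{i'+1}$, and the comparisons against $(h,0)$ and $(0,v)$ use $v=\lambda_1\ge\lambda_i>\lambda_{i+1}$. Consequently, for any displayed generator $g$ of bidegree $(d_1,d_2)$, the ideal generated by the remaining displayed generators vanishes in bidegree $(d_1,d_2)$ (a nonzero element there would need a generator of bidegree $\le(d_1,d_2)$), while $g\ne 0$ sits in that bidegree; so no $g$ can be omitted, and the displayed set is a minimal homogeneous generating set. (An alternative but longer route would derive $I(X)=J$ and minimality together from the bigraded minimal free resolution of $R/I(X)$, built inductively by a mapping-cone argument from the Mayer--Vietoris sequence $0\to R/I(X)\to R/I(X')\oplus R/I(X\cap H_1)\to R/\bigl(I(X')+I(X\cap H_1)\bigr)\to 0$.)
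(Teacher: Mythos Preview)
The paper does not prove this lemma at all; it simply cites \cite[Theorem~5.1]{VT2} and moves on. So there is nothing to compare your argument against in this paper itself.

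That said, your self-contained proof is correct and pleasantly elementary. A few remarks on points you flagged as needing care: your check that $X'=X\setminus H_1$ is again ACM follows immediately from Theorem~\ref{ACMcharacterize}, since the first coordinate of a point determines which horizontal rule it lies on, so removing all points on $H_1$ cannot destroy the ACM condition. Your description of the partition and vertical rules of $X'$ is right by the Ferrers shape. In the inductive step, the phrase ``restriction to $H_1$ is a degree-$j$ binary form in the $y$'s'' is a slight abuse (the quotient $R/(H_1)$ has three variables), but since the bidegree-$(i,j)$ piece of $R/(H_1)$ is a single power of the surviving $x$-variable times $k[y_0,y_1]_j$, the argument goes through exactly as you say: subtract off a multiple of $V_1\cdots V_v$ to make $F$ divisible by $H_1$, then use that $H_1$ is a nonzerodivisor on $R/I(X')$ to conclude $F_1\in I(X')$. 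The bookkeeping matching corner generators of $X'$ (after multiplication by $H_1$) to corner generators of $X$ is exactly as you describe, including the dichotomy at $i=1$ according to whether $\lambda_2<\lambda_1$ or $\lambda_2=\lambda_1$.

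Your minimality argument via pairwise incomparability of the bidegrees is also correct and is the cleanest way to see it; the key inequality $\lambda_{i+1}\ge\lambda_{i'}>\lambda_{i'+1}$ for corner indices $i<i'$ is exactly what is needed. The alternative mapping-cone route you mention is closer in spirit to how such results are often proved in the literature (and is essentially how the bigraded resolution is obtained in \cite{VT2}), but your direct argument is shorter for the purpose at hand.
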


\begin{example} In Example \ref{renumber}, we have
$\lambda = (6,5,3,1,1)$, so $I(X)$ has generators
\[\{H_1H_2H_3H_4H_5,V_1V_2V_3V_4V_5V_6,H_1V_1V_2V_3V_4V_5,H_1H_2V_1V_2V_3,H_1H_2H_3V_1\}.\]
\end{example}

In light of Lemma \ref{gensofideal}, we can write down the generators
of $I(X)^2$ for any finite reduced ACM subscheme $X$ in $\popo$.
We end this section by showing that $I(X)^2 = I(X)^{(2)}$.  Note
that the ideal $I(X)^{(2)}$ defines a subscheme whose
support is $X$ and whose points all have multiplicity two
(alternatively, when viewed as a graded ideal,
 $I(X)^{(2)}$ defines a union of ``fat lines'' in $\mathbb{P}^3$).
We first recall a relevant fact. For our purposes, it
is sufficient to know that the algorithm described in \cite{GVT1}
always produces a set of generators for $I(X)^{(2)}$ of the following form.

\begin{lemma} \label{gensofideal2}
Let $X \subseteq \popo$ be a finite reduced ACM subscheme.
Let $H_1,\ldots,H_h$ denote the horizontal rules and
$V_1,\ldots,V_v$ denote the vertical rules which
minimally contain $X$. There is a minimal set of generators
of $I(X)^{(2)}$ such that every generator $F$ has one of the following forms:
$(a)$ $H_1^2\cdots H_h^2$; $(b)$ $H_1\cdots H_hV_1\cdots V_v$;
$(c)$ $V_1^2\cdots V_v^2$; or
$(d)$ there exist $1 \leq a \leq b \leq h$ and $1 \leq c \leq d \leq v$
such that
\[F = H_1^2H_2^2\cdots H_a^2H_{a+1}^1\cdots H_b^1V_1^2V_2^2\cdots V_c^2V_{c+1}^1\cdots
V_d^1.\]
(If $a = b$ in case $(d)$, $F$ has the form  $H_1^2H_2^2\cdots H_a^2V_1^2V_2^2\cdots V_c^2V_{c+1}^1\cdots
V_d^1$, and similarly for $c=d$.)
\end{lemma}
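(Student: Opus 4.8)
The plan is to combine the explicit algorithm of \cite{GVT1} with a short divisibility (exchange) argument that pins down the exponent pattern. First I would record that, because each point $(P_i,Q_j)\in X$ has ideal $I((P_i,Q_j))=(H_i,V_j)$ generated by a regular sequence, its symbolic and ordinary squares coincide, and since the associated primes of $I(X)$ are exactly these $(H_i,V_j)$ we get
\[ I(X)^{(2)}=\bigcap_{(i,j)\in S_X}(H_i,V_j)^2=\bigcap_{(i,j)\in S_X}(H_i^2,H_iV_j,V_j^2). \]
Next I would establish a membership criterion for products of the defining forms: a product $F=H_1^{a_1}\cdots H_h^{a_h}V_1^{b_1}\cdots V_v^{b_v}$ lies in $I(X)^{(2)}$ if and only if $a_i+b_j\ge 2$ for every $(i,j)\in S_X$. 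This is a local check at each point $(P_i,Q_j)$: since distinct points of $\pr^1$ give nonvanishing linear forms, each $H_k$ with $k\neq i$ and each $V_\ell$ with $\ell\neq j$ is a unit in the localization $R_{(H_i,V_j)}$, while $H_i,V_j$ form part of a regular system of parameters, so $F$ lies in $(H_i,V_j)^2$ exactly when $a_i+b_j\ge 2$.

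The key structural input I would take from \cite{GVT1} is that $I(X)^{(2)}$ admits a minimal generating set $\mathcal{G}$ each of whose elements is such a product of the $H_i$ and $V_j$; this is what the cited algorithm produces, and it is genuinely needed, because the $H_i$ (and likewise the $V_j$) span only a two-dimensional space of linear forms, so $I(X)^{(2)}$ is not literally a monomial ideal and a purely combinatorial membership argument is unavailable. Granting this, fix $F=\prod_i H_i^{a_i}\prod_j V_j^{b_j}\in\mathcal{G}$. Because $F$ is a minimal generator, no factor $H_i$ or $V_j$ can be divided out while remaining in $I(X)^{(2)}$; applying the membership criterion to $F/H_i$ forces $a_i\le 2$, and similarly $b_j\le 2$. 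Now relabel so that $\lambda_1\ge\cdots\ge\lambda_h$ as guaranteed by Theorem \ref{ACMcharacterize}, which makes $S_X$ a Ferrers diagram with nonincreasing columns as well. I then run an exchange argument: if $i<i'$ but $a_i<a_{i'}$, then for every $j\le\lambda_{i'}\le\lambda_i$ the constraint coming from $(i,j)\in S_X$ gives $b_j\ge 2-a_i\ge 3-a_{i'}$, so $(a_{i'}-1)+b_j\ge 2$ and hence $F/H_{i'}$ still satisfies every constraint involving row $i'$; as $a_{i'}$ occurs in no other constraint, $F/H_{i'}\in I(X)^{(2)}$, contradicting minimality. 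Thus $a_1\ge\cdots\ge a_h$, and the conjugate argument applied to the columns gives $b_1\ge\cdots\ge b_v$.

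A nonincreasing sequence valued in $\{0,1,2\}$ is precisely a step function $(2,\ldots,2,1,\ldots,1,0,\ldots,0)$, so both exponent vectors of $F$ have exactly this shape; the three extremal cases $a_i\equiv 2,\,b_j\equiv 0$, then $a_i\equiv 1,\,b_j\equiv 1$, then $a_i\equiv 0,\,b_j\equiv 2$ give $(a)$, $(b)$, $(c)$, and the mixed step functions give $(d)$, which completes the classification. I expect the main obstacle to be the interface between the two halves: the monotonicity/exchange step is clean once one argues with honest products of forms, so the substantive point is the reduction to a product-form minimal generating set, i.e. controlling the linear dependences among the $H_i$ and among the $V_j$, and this is exactly the content imported from \cite{GVT1}. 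A secondary point I would check carefully is the bookkeeping of the boundary and mixed cases when translating the step-function shapes into the index ranges of $(a)$–$(d)$, together with the fact that the single relabeling making the rows nonincreasing simultaneously makes the columns nonincreasing, so that the exchange argument applies verbatim in both directions.
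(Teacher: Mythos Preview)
Your approach differs from the paper's: the paper's ``proof'' is really a pointer to \cite[Theorem~3.15]{Gu2} and \cite[Algorithm~5.1]{GVT1}, which produce an explicit list of minimal generators (with specific values of $a,b,c,d$ read off from $\lambda$) and then observes these have the stated shape. You instead import from \cite{GVT1} only the weaker fact that some minimal generating set consists of products of the $H_i$ and $V_j$, and then recover the exponent pattern by a clean local membership criterion plus an exchange/minimality argument. This is more self-contained and explains \emph{why} the exponents are bounded by $2$ and nonincreasing, rather than verifying it by inspection of a list; the paper's route, on the other hand, yields the exact indices $a,b,c,d$ in terms of $\lambda$, which your argument does not. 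One caution on the final bookkeeping you already flag: the step-function shape you derive is slightly broader than case~(d) as literally written, since (d) requires $a\ge 1$ and $c\ge 1$, i.e.\ at least one squared $H$ and at least one squared $V$. For $\lambda=(2,1)$ the element $H_1^2H_2V_1$ is a minimal generator of $I(X)^{(2)}=I(X)^2$ (it is $(H_1H_2)(H_1V_1)$, of bidegree $(3,1)$, and nothing of smaller degree lies in $I(X)^2$), and its $V$-exponent vector is $(1,0)$, so it does not fit (d) with $c\ge 1$. This appears to be a minor imprecision in the lemma's stated index ranges rather than a flaw in your reasoning; your step-function conclusion is precisely what the factorization $F=F_1F_2$ in the proof of Theorem~\ref{ACMsqrsThm} actually needs.
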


\begin{proof}[Sketch of the proof]
For a minimal set of generators for
$I(X)^{(2)}$ in terms of the partition $\lambda$, see
\cite[Theorem 3.15]{Gu2} and \cite[Algorithm 5.1]{GVT1}; in particular,
explicit values of $a$,$b$,$c$, and $d$ are given for the
elements described in $(d)$.
\end{proof}

\begin{theorem}\label{ACMsqrsThm}
Let $X \subseteq \popo$ be a finite reduced ACM subscheme.  Then
$I(X)^2 = I(X)^{(2)}$.
\end{theorem}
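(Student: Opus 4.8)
The plan is to prove the containment $I(X)^{(2)} \subseteq I(X)^2$, since the reverse containment $I(X)^2 \subseteq I(X)^{(2)}$ always holds. By Lemma \ref{gensofideal2}, it suffices to show that each of the generators $F$ of $I(X)^{(2)}$ listed in cases $(a)$--$(d)$ actually lies in $I(X)^2$, and for this I would exhibit each such $F$ explicitly as a product of two elements of $I(X)$, using the generators of $I(X)$ supplied by Lemma \ref{gensofideal}.

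First I would dispose of the three easy cases. For $(a)$, write $F = H_1^2\cdots H_h^2 = (H_1\cdots H_h)(H_1\cdots H_h)$, and $H_1\cdots H_h \in I(X)$ by Lemma \ref{gensofideal}; similarly case $(c)$ factors as $(V_1\cdots V_v)^2$. For $(b)$, write $F = H_1\cdots H_h V_1\cdots V_v = (H_1\cdots H_h)(V_1\cdots V_v)$, again a product of two generators of $I(X)$. The content of the argument is case $(d)$: here $F = H_1^2\cdots H_a^2 H_{a+1}\cdots H_b \, V_1^2\cdots V_c^2 V_{c+1}\cdots V_d$ with $1\le a\le b\le h$ and $1\le c\le d\le v$. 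The key point is that, by the description of $I(X)^{(2)}$ in \cite{Gu2,GVT1}, the indices satisfy the combinatorial constraints imposed by the partition $\lambda$ — roughly, $b \le $ (something like) $h$ when $c$ is small and $d \le \lambda_{a+1}$, or more precisely the relations that make $H_1\cdots H_b V_1\cdots V_c$ and $H_1\cdots H_a V_1\cdots V_d$ each divisible in the appropriate sense by a generator of $I(X)$ from Lemma \ref{gensofideal}. I would then write $F = (H_1\cdots H_b V_1\cdots V_c)(H_1\cdots H_a V_1\cdots V_d)$, observe that this product equals $F$ by counting exponents ($H_i$ appears with exponent $2$ for $i\le a$ and exponent $1$ for $a< i\le b$; similarly for the $V_j$), and verify that each of the two factors lies in $I(X)$ by checking it is a multiple of one of the generators listed in Lemma \ref{gensofideal}.

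The main obstacle is precisely this last verification: showing that $H_1\cdots H_b V_1\cdots V_c \in I(X)$ and $H_1\cdots H_a V_1\cdots V_d \in I(X)$. This is where the explicit values of $a,b,c,d$ from \cite[Theorem 3.15]{Gu2} and \cite[Algorithm 5.1]{GVT1} are essential. A form $H_1\cdots H_p V_1\cdots V_q$ vanishes on $X$ precisely when it vanishes at every point of $X$; a point of $X$ lying on $H_i \cap V_j$ with $i > p$ forces $j \le q$ (i.e.\ $\lambda_i \le q$ after the standard relabelling that makes $S_X$ a Ferrers diagram). So the claim reduces to a purely combinatorial inequality relating $p, q$ to the partition $\lambda$, which the algorithm of \cite{GVT1} guarantees holds for the specific factorization indices arising in case $(d)$. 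I would carry this out by recalling the exact output of that algorithm and matching it against the condition ``$\lambda_{p+1} \le q$,'' which is exactly the condition in Lemma \ref{gensofideal} for $H_1\cdots H_p V_1 \cdots V_q$ to be (a multiple of) a minimal generator of $I(X)$. Once both factors are seen to lie in $I(X)$, the theorem follows immediately, and combined with Theorem \ref{ACMresult}'s reduction via Morey's result this is what reduces the whole problem to the single equality $I^{(3)} = I^3$.
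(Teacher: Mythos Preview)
Your factorization of the case~$(d)$ generator as
\[
F = \bigl(H_1\cdots H_bV_1\cdots V_c\bigr)\bigl(H_1\cdots H_aV_1\cdots V_d\bigr)
\]
is exactly the one the paper uses, and the easy cases $(a)$--$(c)$ are handled identically. The difference is in how membership of the two factors in $I(X)$ is verified. You plan to invoke the explicit values of $a,b,c,d$ produced by the algorithm of \cite{GVT1} and then check the partition inequality $\lambda_{p+1}\le q$ that characterizes when $H_1\cdots H_pV_1\cdots V_q$ is a multiple of a minimal generator from Lemma~\ref{gensofideal}. This works, but it is the laborious route, and you correctly flag it as the main obstacle.

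The paper bypasses this entirely with a short direct vanishing argument that uses only the single hypothesis $F\in I^{(2)}$, never the explicit algorithm output. For $F_1 = H_1\cdots H_aV_1\cdots V_d$, take $(P,Q)\in X$: if it lies on some $H_i$ with $i\le a$, then $F_1(P,Q)=0$ trivially; otherwise $(P,Q)$ lies on at most one of $H_{a+1},\ldots,H_b$, so the $H$-factors of $F$ contribute multiplicity at most~$1$ at $(P,Q)$. Since $F$ vanishes to order~$\ge 2$ there, at least one $V_j$ with $j\le d$ must vanish at $(P,Q)$, and hence $F_1(P,Q)=0$. The argument for $F_2$ is symmetric. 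This is cleaner because it treats Lemma~\ref{gensofideal2} as a black box for the \emph{shape} of the generators and never unpacks the specific indices; your approach trades that for a combinatorial bookkeeping exercise against \cite{Gu2,GVT1}.
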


\begin{proof}
Let $I = I(X)$.  It suffices to prove $I^{(2)} \subseteq I^2$.
Let $F$ be any generator of $I^{(2)}$.
By Lemma \ref{gensofideal2}, $F$ must have one of four forms.
Since $H_1\cdots H_h$ and $V_1\cdots V_v$ are generators of $I$,
the generators
 $H_1^2\cdots H_h^2$, $H_1\cdots H_hV_1\cdots V_v$,
and $V_1^2\cdots V_v^2$ all belong to $I^2$.

We therefore take a generator of $I^{(2)}$ of the form
\[F = H_1^2H_2^2\cdots H_a^2H_{a+1}^1\cdots H_b^1V_1^2V_2^2\cdots V_c^2V_{c+1}^1\cdots
V_d^1\]
for some $1 \leq a \leq b \leq h$ and $1 \leq c \leq d \leq v$.  Factor
$F$ as
\[F_1 = H_1H_2\cdots H_aV_1\cdots V_cV_{c+1}\cdots V_d ~~\mbox{and}~~
F_2 = H_1H_2 \cdots H_aH_{a+1}\cdots H_bV_1\cdots V_c.\]
We claim that both $F_1$ and $F_2$ are elements of $I$ (and hence $F=F_1F_2\in I^2$).
We show that $F_1 \in I$, since the other case is similar.  Let $(P,Q)\in X$.
Then $(P,Q)$ is either on one of the rulings $H_1,\ldots,H_a$
or it is not.  If it is on one of these rulings, say $H_i$,
then $F_1((P,Q)) = 0$ because $H_i((P,Q)) = 0$.  On the other
hand, suppose that $(P,Q)$ is not on any of these rulings.  Since
$F$ vanishes with multiplicity two at $(P,Q)$, and
because $(P,Q)$ can lie on at most one of the rulings $H_{a+1},
\ldots,H_b$, there must be at least one
vertical ruling $V_j$ among $V_1,\ldots,V_d$ such that $V_j((P,Q)) = 0$.
But this means $F_1((P,Q)) = 0$.  Hence $F_1 \in I$.
\end{proof}


\section{Main Result}

We now present the proof of the main result of this paper, Theorem \ref{ACMresult}.

\begin{proof}[{Proof of Theorem \ref{ACMresult}}]
Since $I$ is homogeneous, we have $I^{(m)}=I^m$ if and only if $J^{(m)}=J^m$,
where $J=IR_M$, $R_M$ being the localization of $R=k[\popo]$ at the ideal
$M$ generated by the variables.
Note that $J$ is a perfect ideal
(i.e., $\operatorname{pd}_{R_M}(R_M/J)=\operatorname{depth}(J,R_M)$;
we have $\operatorname{depth}(J,R_M)= \operatorname{codim}(J)=2$
since $R_M$ is Cohen-Macaulay,
and we obtain $\operatorname{pd}_{R_M}(R_M/J)=2$ from the
Auslander-Buchsbaum formula),
it has codimension 2, $R_M/J$ is Cohen-Macaulay, and $J$ is generically
a complete intersection (i.e., the localizations of $J$ at its minimal associated primes
are complete intersections).
Now \cite[Theorem 3.2]{Mo} asserts that $J^{(m)}=J^m$ for all $m\geq 1$ if and only if
$J^{(m)}=J^m$ for $1\leq m\leq \dim(R_M)-1$.
Because $\dim(R_M) = 4$, it follows that
$J^{(2)}=J^2$ and $J^{(3)}=J^3$ implies $J^{(m)}=J^m$ for all $m \geq 1$,
and thus $I^{(2)}=I^2$ and $I^{(3)}=I^3$ implies $I^{(m)}=I^m$ for all $m \geq 1$.
But we always have $I^{(2)}=I^2$ by Theorem \ref{ACMsqrsThm},
so the conclusion follows.
\end{proof}

The next example shows that $I^{(3)}\neq I^3$
can occur for an ideal $I$ of a finite reduced ACM subscheme in $\popo$, while the example after that
shows that $I^{(3)}=I^3$ can occur for a finite reduced ACM subscheme,
even if it is not a complete intersection.

\begin{example}\label{sixptsexample}  Let $X$ be the reduced subscheme
consisting of six points
(unique up to choice of bi-homogeneous coordinates on $\popo$)
having diagram $S_X$ as in Figure 3.
Let $I=I(X)$ be the ideal of $X$ and let $\alpha(I) = \min\{d~|~
I_d \neq (0)\}$, where $I_d$ is the homogeneous component of $I$
of degree $d$ (with respect to the usual grading on $R=k[\pr^3]$).
It is easy to check that $\alpha(I)=3$ and hence
$\alpha(I^3)=3\alpha(I)=9$, and
so the bi-homogeneous component $(I^3)_{(4,4)}$ of bidegree $(4,4)$
is $(0)$ (since $(I^3)_{(4,4)}\subseteq (I^3)_8=(0)$). But there is a curve
$C\subset\popo$ of bidegree $(4,4)$
vanishing on $X$ to order 3.
It is, as shown in Figure 4,
the zero locus of $H_1^2V_1^2H_2V_2F$, where $\deg F = (1,1)$,
with the zero locus of $F$ represented by the diagonal line.
(One can check by B\'ezout's Theorem that in fact $C$ is unique.)

\hspace{.5in}\hbox to 3in{
\begin{picture}(150,75)(25,5)
\put(-30,30){Figure 3}
\put(60,10){\line(0,1){50}}
\put(80,10){\line(0,1){50}}
\put(100,10){\line(0,1){50}}

\put(54,70){$V_1$}
\put(74,70){$V_2$}
\put(94,70){$V_3$}

\put(55,15){\line(1,0){50}}
\put(55,35){\line(1,0){50}}
\put(55,55){\line(1,0){50}}

\put(35,11){$H_3$}
\put(35,31){$H_2$}
\put(35,51){$H_1$}

\put(60,15){\circle*{5}} \put(63,18){$$}
\put(60,35){\circle*{5}} \put(63,38){$$}
\put(60,55){\circle*{5}} \put(63,58){$$}

\put(80,55){\circle*{5}} \put(83,58){$$}
\put(80,35){\circle*{5}} \put(83,38){$$}

\put(100,55){\circle*{5}} \put(103,58){$$}

\end{picture}
}

\vskip-.9in
\hspace{3.75in}\hbox to 3in{
\begin{picture}(150,60)(25,5)
\put(-30,30){Figure 4}
\put(54,70){$V_1$}
\put(74,70){$V_2$}
\put(94,70){$V_3$}
\put(35,11){$H_3$}
\put(35,31){$H_2$}
\put(35,51){$H_1$}

\put(59,10){\line(0,1){50}}
\put(61,10){\line(0,1){50}}
\put(80,10){\line(0,1){50}}

\put(55,35){\line(1,0){50}}
\put(55,56){\line(1,0){50}}
\put(55,54){\line(1,0){50}}

\put(55,10){\line(1,1){50}}

\put(60,15){\circle*{5}} \put(63,18){$$}
\put(60,35){\circle*{5}} \put(63,38){$$}
\put(60,55){\circle*{5}} \put(63,58){$$}

\put(80,55){\circle*{5}} \put(83,58){$$}
\put(80,35){\circle*{5}} \put(83,38){$$}

\put(100,55){\circle*{5}} \put(103,58){$$}
\end{picture}
}
So, $(I^{(3)})_{(4,4)} \neq (0)$, whence $I^{(3)} \neq I^3$.
As pointed out in the introduction, this example gives a negative answer to the question of
Huneke discussed in the opening section.
\end{example}

In the next example we consider the case
(unique up to choice of bi-homogeneous coordinates on $\popo$) of
a finite reduced ACM subscheme consisting of 3 points which is not a complete intersection, but
whose ideal $I$ nevertheless has $I^{(3)} = I^3$.

\begin{example}\label{3acm}
Consider a reduced ACM subscheme $X$
consisting of 3 points, $Q_1,Q_2,Q_3\in\popo$, not
all on a single rule.   Up to
choice of coordinates, we may assume $P_1 = [1:0], P_2 =
[0:1]\in\pr^1$ and that $Q_1=(P_1,P_1), Q_2=(P_1,P_2),
Q_3=(P_2,P_1)$. In this case, the ideal $I=I(X)$ of $X$ is the
monomial ideal $I=(x_1,y_1) \cap (x_1,y_0) \cap (x_0,y_1)=(x_0x_1,
y_0y_1,x_0y_0)$. Then
\[I^3=(y_0^3y_1^3, x_0y_0^3y_1^2, x_0x_1y_0^2y_1^2,
x_0^2y_0^3y_1, x_0^2x_1y_0^2y_1, x_0^2x_1^2y_0y_1, x_0^3y_0^3,
x_0^3x_1y_0^2, x_0^3x_1^2y_0, x_0^3x_1^3).
\]
A tedious but elementary computation shows that
\[
\begin{split}
I^{(3)}&=(x_1,y_1)^3 \cap (x_1,y_0)^3 \cap (x_0,y_1)^3\\
&=(y_0^3y_1^3, x_0y_0^3y_1^2, x_0x_1y_0^2y_1^2, x_0^2y_0^3y_1,
x_0^2x_1y_0^2y_1, x_0^2x_1^2y_0y_1, x_0^3y_0^3, x_0^3x_1y_0^2,
x_0^3x_1^2y_0, x_0^3x_1^3).
\end{split}
\]
Thus $I^3=I^{(3)}$, and hence $I^m=I^{(m)}$ for all $m\geq1$ by
Theorem \ref{ACMresult}.
\end{example}


\section{Numerical conditions that imply $I^{(3)} = I^3$}

Let $I$ be the ideal of a reduced finite ACM subscheme $X$ in $\popo$. Theorem
\ref{ACMresult} reduces the problem of determining whether $I^{(m)} = I^m$
for all $m \geq 1$ to simply checking if equality holds when $m=3$.
It has been shown that many of the algebraic invariants of $I$ (e.g.,
the graded Betti numbers of its bigraded minimal free
resolution \cite{VT2}) are encoded into
its associated partition $\lambda$. This motivates us to ask if knowing $\lambda$
is enough to determine whether $I^{(3)} = I^3$, and consequently,
whether $I^{(m)} = I^m$ for all $m \geq 1$.
Computer experimentation using \cite{C,Mt} has suggested
the following specific characterization.

\begin{conjecture}\label{conjecture}
Let $X$ be a finite reduced ACM subscheme
in $\popo$ with associated partition
$\lambda = (\lambda_1,\ldots,\lambda_h)$.  Let $I = I(X)$.
Then $I^{(3)} = I^3$ if and only if $\lambda$ has one of the following
two forms:
\begin{enumerate}
\item[$(i)$] $\lambda = (a,a,\ldots,a)$ for some integer $a \geq 1$.
\item[$(ii)$] $\lambda = (a,\ldots,a,b,\ldots,b)$ for some integers $a > b \geq 1$.
\end{enumerate}
\end{conjecture}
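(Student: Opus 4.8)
We outline an approach toward Conjecture~\ref{conjecture}. Since $I^m\subseteq I^{(m)}$ always, the issue is when $I^{(3)}\subseteq I^3$. The first ingredient is a factorization lemma generalizing the proof of Theorem~\ref{ACMsqrsThm}: \emph{every form $F=\prod_i H_i^{e_i}\prod_j V_j^{f_j}$ that vanishes to order at least $3$ at each point of $X$ lies in $I^3$.} Indeed, one may assume $e_i,f_j\le 3$ (capping a larger exponent preserves the vanishing condition and only removes a factor lying in $R$), and set, for $\ell=1,2,3$,
\[F_\ell=\prod_{i:\,e_i\ge \ell}H_i\cdot\prod_{j:\,f_j\ge 4-\ell}V_j.\]
Counting exponents shows $F_1F_2F_3=F$; and if $(P_i,Q_j)\in X$ has $e_i<\ell$, the hypothesis forces $f_j\ge 3-e_i\ge 4-\ell$, so $V_j\mid F_\ell$; hence each $F_\ell$ vanishes on $X$, i.e. $F_\ell\in I$ by Lemma~\ref{gensofideal}, and $F=F_1F_2F_3\in I^3$. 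Thus $I^{(3)}=I^3$ if and only if $I^{(3)}$ is generated by products of the ruling forms $H_i,V_j$, and the conjecture becomes the assertion that this happens exactly when $\lambda$ has at most two distinct parts.

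\medskip

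\emph{The ``if'' direction.} If $\lambda$ has at most two distinct parts, Lemma~\ref{gensofideal} gives
\[I=(PQ,\,PR,\,RS),\qquad PQ=H_1\cdots H_h,\ \ RS=V_1\cdots V_a,\ \ PR=H_1\cdots H_sV_1\cdots V_b,\]
where $P=H_1\cdots H_s$, $Q=H_{s+1}\cdots H_h$ are coprime products of linear forms in $x_0,x_1$ and $R=V_1\cdots V_b$, $S=V_{b+1}\cdots V_a$ are coprime products of linear forms in $y_0,y_1$ ($Q$ or $S$ may be empty, which covers case $(i)$). I would then: (1) verify in $k[u,v,w,z]$ the monomial identity
\[(uv,uw,wz)^m=(u,wz)^m\cap(v,w)^m\qquad(m\ge 1),\]
both sides being the set of monomials $u^av^bw^cz^d$ with $a+c\ge m$, $b+c\ge m$ and $a+d\ge m$; (2) observe that $k[x_0,x_1,y_0,y_1]$ is flat --- in fact free --- over the subring $k[P,Q,R,S]\cong k[u,v,w,z]$, since $\{P,Q\}$ is a homogeneous system of parameters for $k[x_0,x_1]$ and $\{R,S\}$ for $k[y_0,y_1]$; and (3) note that $(P,RS)$ and $(Q,R)$ are complete intersections of forms in disjoint variable sets, so their symbolic powers equal their ordinary powers and $(P,RS)^m\cap(Q,R)^m=\bigcap_{(P_i,Q_j)\in X}(H_i,V_j)^m=I^{(m)}$. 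Extending the identity of (1) along the flat map of (2) then yields $I^m=(P,RS)^m\cap(Q,R)^m=I^{(m)}$ for all $m\ge1$; in particular $I^{(3)}=I^3$ (alternatively one may stop at $m=3$ and apply Theorem~\ref{ACMresult}).

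\medskip

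\emph{The ``only if'' direction, and the main obstacle.} If $\lambda$ has at least three distinct parts, $I$ has at least four minimal generators, and, writing the horizontal and vertical blocks of $S_X$ delimited by the two steps as products of linear forms $P_1,P_2,P_3$ and $R_1,R_2,R_3$, the analogue of the ring map in (2) --- now $k[u_1,u_2,u_3,w_1,w_2,w_3]\to k[x_0,x_1,y_0,y_1]$, $u_i\mapsto P_i$, $w_j\mapsto R_j$ --- fails to be injective, since three forms in $x_0,x_1$ are algebraically dependent; so the structural argument above cannot apply, and one must instead produce an $F\in I^{(3)}\setminus I^3$. Generalizing Example~\ref{sixptsexample}, the natural candidate is $F=F_0\,G$, where $F_0$ is a ``deficient'' ruling product vanishing to order $2$ along $X$ away from the concave corners of the staircase $S_X$, and $G$ is a form of bidegree $(1,1)$ vanishing at those corners (its existence coming from a dimension count, as in Example~\ref{sixptsexample}); one then checks that $F$ vanishes to order $3$ on $X$ yet escapes $I^3$, either because $\deg F<3\alpha(I)$ (with $\alpha(I)$ read off from Lemma~\ref{gensofideal}), or otherwise by comparing the bigraded Hilbert functions of $R/I^3$ and $R/I^{(3)}$ (where $I^{(3)}=\bigcap_{(P_i,Q_j)\in X}(H_i,V_j)^3$) bidegree by bidegree, the former being accessible through the Hilbert--Burch resolution of the almost complete intersection $I$. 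I expect this direction to be the real difficulty: the shape of $F_0$, the relevant corners, and the bidegree of $G$ all depend on $\lambda$ in a way that resists a uniform treatment, and when the distinct values of $\lambda$ are not consecutive a single $(1,1)$-form $G$ need not suffice, so one is likely driven to the Hilbert-function comparison, whose combinatorial bookkeeping --- entirely in terms of $\lambda$ --- is the part I would expect to be genuinely laborious.
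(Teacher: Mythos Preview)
This is a conjecture, not a theorem: the paper does not prove it, but offers partial evidence on both sides. Your proposal should be read against that backdrop.

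\textbf{The ``if'' direction.} Here you actually go further than the paper. The paper treats case $(i)$ by noting $I$ is a complete intersection, and for case $(ii)$ proves only the very special subcase $\lambda=(\underbrace{a,\ldots,a}_t,a-1)$ (Theorem~\ref{examplesofI^3}), by explicitly computing a minimal generating set of $I^{(3)}$ via the completion $Y$ and separator arguments (Lemmas~\ref{gensYspecial} and~\ref{gensWandXspecial}) and then checking each generator lies in $I^3$. Your argument is different and handles all of case $(ii)$: the monomial identity $(uv,uw,wz)^m=(u,wz)^m\cap(v,w)^m$ is correct (both sides are the monomials with $a+c\ge m$, $a+d\ge m$, $b+c\ge m$), the freeness of $k[x_0,x_1,y_0,y_1]$ over $k[P,Q,R,S]$ is correct since $\{P,Q\}$ and $\{R,S\}$ are homogeneous systems of parameters for $k[x_0,x_1]$ and $k[y_0,y_1]$ respectively, and flat base change transports the identity to $I^m=(P,RS)^m\cap(Q,R)^m$. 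The identification of the right-hand side with $I^{(m)}$ is also correct, since $(P,RS)$ and $(Q,R)$ are complete intersections whose associated primes partition the primes of $I$. So you have a complete, uniform proof of the ``if'' direction, stronger than what the paper establishes and by a cleaner method. (Your opening factorization lemma is correct as well and is a natural generalization of the proof of Theorem~\ref{ACMsqrsThm}, though your flatness argument does not end up using it.)

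\textbf{The ``only if'' direction.} Here you are honest that the argument is only an outline; the paper is in the same position. The paper's evidence (Theorem~\ref{nonacmexample}) covers two explicit families of $\lambda$ with at least three distinct parts, in each case exhibiting a bihomogeneous $F\in I^{(3)}$ involving a single $(1,1)$ form $D$ through three ``corner'' points and showing $F\notin I^3$ by a degree or bidegree count against the generators of $I$ from Lemma~\ref{gensofideal}. Your sketch is essentially the same idea, and your assessment of the difficulty is accurate: for general $\lambda$ the corners, the required auxiliary form $G$, and the bidegree bookkeeping do not admit a uniform description, and neither you nor the paper closes this gap.
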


In other words, the conjecture is that $I^{(3)} \neq I^3$ if and only
if $\lambda$ contains at least three distinct
entries.  Notice that this is the case in
Example \ref{sixptsexample} because the associated tuple is $\lambda
= (3,2,1)$.  We round out this paper by giving supporting evidence
for this conjecture.

\subsection{$\lambda$ has at most two distinct entries}
We first focus on the case that $\lambda = (a,\ldots,a)$.
If $X$ is a finite reduced ACM subscheme in $\popo$ with associated tuple
$\lambda = (a,\ldots,a)$, then  by Lemma \ref{gensofideal}, we have $I= I(X) =
(H_1\cdots H_{|\lambda|},V_1\cdots V_a)$, and so
$I$ is a complete
intersection.  Because $I$ is a
complete intersection, we have $I^{(m)}  = I^m$ for all $m$, and
in particular, for $m=3$. Thus, the $\lambda$'s of the
form $(i)$ in Conjecture \ref{conjecture} have the desired
property.

We therefore turn our attention to the case that $\lambda =
(a,\ldots,a,b,\ldots,b)$, and give evidence for the conjecture
by proving it when $\lambda = (\underbrace{a,\ldots,a}_t,a-1)$.
In this case
$$I(X) = (H_1\cdots H_{t+1},H_1\cdots H_tV_1\cdots V_{a-1},V_1\cdots V_a).$$
We first construct two new zero-dimensional subschemes $Y$ and $W$
such that $Z \subseteq W \subseteq Y$ where $Z$ is the
subscheme defined by $I(X)^{(3)}$.
Let $C$ denote the complete intersection defined by
$I(C) =  (H_1\cdots H_{t_1+1},V_1\cdots V_a)$.  Note that $X \subseteq C$.
We then define $Y$, respectively $W$, to be
the schemes defined by the ideals
\begin{equation}\label{schemesYW}
I(Y) =I(X)^{(3)} \cap\!\!\! \bigcap_{P\in C\setminus X}\!\!\! I(P)^2 ~~\mbox{and}~~~
I(W) = I(X)^{(3)} \cap\!\!\! \bigcap_{P\in C\setminus X}\!\!\! I(P).
\end{equation}
Note that in this case, $C\setminus X$ consists of a single point,
namely the point $P = H_{t+1} \cap V_a$.
Our strategy is to first find the generators of $I(Y)$, then find
the generators of $I(W)$ in terms of $I(Y)$, and then find the generators
of $I(X)^{(3)}$ in terms of the generators of $I(W)$.

As in \cite{Gu2}, we call $Y$ the {\it completion} of $Z$.
Moreover, by \cite{GVT}, we have:

\begin{lemma} \label{gensYspecial} Let  $X$ be a finite reduced ACM
subscheme in $\popo$ with partition $\lambda =
(\underbrace{a,\ldots,a}_{t},a-1)$. 
Let $Z$ denote the subscheme defined by $I(X)^{(3)}$
(i.e., $Z$ is the scheme whose points all have multiplicity three and whose
support is $X$), and let $Y$
denote the completion of $Z$ as described above.   Then a minimal set 
of generators of
$I(Y)$ is given by
\begin{eqnarray*}
G_1 &= & V_1^3\cdots V_{a}^3, \\
G_2 &= &H_1\cdots H_{t}V_{1}^3\cdots V_{a-1}^3V_{a}^2, \\
G_3 &= &H_1 \cdots H_{t+1}V_{1}^2\cdots V_{a}^2, \\
G_4 &= &H_1^2\cdots H_{t}^2H_{t+1}V_{1}^2\cdots V_{a-1}^2V_{a},\\
G_5 &= &H_1^2\cdots H_{t}^2H_{t+1}^2V_{1}\cdots V_{a}, \\
G_6 & = &H_1^3\cdots H_{t}^3H_{t+1}^2V_{1}\cdots V_{a-1}, ~~\text{and} \\
G_7 & = &H_1^3\cdots H_{t_1+1}^3
\end{eqnarray*}
\end{lemma}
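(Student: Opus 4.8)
The plan is to prove the two inclusions $(G_1,\dots,G_7)\subseteq I(Y)$ and $I(Y)\subseteq (G_1,\dots,G_7)$ separately. For the first, recall that $I(Y)=\bigcap_{Q\in C}I(Q)^{m_Q}$ with $m_Q=3$ for $Q\in X$ and $m_P=2$, so membership can be tested one point at a time, and that each $G_i$ is a product of the defining forms of the rules $H_1,\dots,H_{t+1},V_1,\dots,V_a$. At a point $Q=H_r\cap V_s$ of $C$ exactly one horizontal rule ($H_r$) and one vertical rule ($V_s$) pass through $Q$, these two forms are a regular system of parameters there, and every other rule is a unit at $Q$; hence the multiplicity of the hypersurface $G_i=0$ at $Q$ equals the sum of the exponents of $H_r$ and of $V_s$ occurring in $G_i$. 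So one tabulates, for each of $G_1,\dots,G_7$, this exponent sum at the three types of point $(r\le t,\ s\le a)$, $(t{+}1,\ s\le a-1)$, $(t{+}1,a)$ and checks that it is $\ge 3,\ \ge 3,\ \ge 2$ respectively; this is a short finite verification.

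For the reverse inclusion the key input is structural: $Y$, being the completion of $Z$, is arithmetically Cohen--Macaulay, and for a zero-dimensional ACM subscheme of $\popo$ whose support lies on the complete-intersection grid $C$ the homogeneous ideal is generated by products of the forms $H_i$ and $V_j$. This is the content of \cite{GVT} (and extends Lemmas~\ref{gensofideal} and \ref{gensofideal2} from $X$ and $X$ doubled to the present setting). Granting it, $I(Y)$ coincides with the ideal generated by all products $H_1^{a_1}\cdots H_{t+1}^{a_{t+1}}V_1^{b_1}\cdots V_a^{b_a}$ lying in $I(Y)$ --- that is, by the local-multiplicity criterion above, by those exponent vectors with $a_r+b_s\ge m_{Q_{rs}}$ for every point $Q_{rs}=H_r\cap V_s$ of $C$ --- and its minimal generators are exactly the divisibility-minimal such monomials.

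It then remains to enumerate these minimal ``vanishing monomials,'' a purely combinatorial task. Since the constraints are invariant under permuting $H_1,\dots,H_t$ among themselves and $V_1,\dots,V_{a-1}$ among themselves, any divisibility-minimal solution must have the symmetric shape $H_1^{\alpha}\cdots H_t^{\alpha}\,H_{t+1}^{\alpha'}\,V_1^{\beta}\cdots V_{a-1}^{\beta}\,V_a^{\beta'}$ (otherwise a single surplus factor could be cancelled), and minimality in each of the four slots $\alpha,\alpha',\beta,\beta'$ translates into the fixed-point system
\[\alpha=\max\bigl(0,\,3-\min(\beta,\beta')\bigr),\qquad \alpha'=\max\bigl(0,\,3-\beta,\,2-\beta'\bigr),\]
\[\beta=\max\bigl(0,\,3-\min(\alpha,\alpha')\bigr),\qquad \beta'=\max\bigl(0,\,3-\alpha,\,2-\alpha'\bigr).\]
Running through the cases $\alpha=0,1,2,3$ (the only possibilities, since every coordinate is forced into $\{0,1,2,3\}$) one finds that the solutions are exactly $(\alpha,\alpha',\beta,\beta')=(0,0,3,3),(1,0,3,2),(1,1,2,2),(2,1,2,1),(2,2,1,1),(3,2,1,0),(3,3,0,0)$, which are precisely the monomials $G_1,\dots,G_7$; a final check that no entry of this list divides another confirms that all seven are needed.

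The only real obstacle is the structural input quoted in the second step: that the completion of a symbolic power of a reduced ACM configuration supported on a complete-intersection grid is again ACM, with ideal generated by products of the rules. Once this is available from \cite{Gu2,GVT}, everything else is the bookkeeping of the two elementary steps above. As an alternative I could try to avoid that input by computing the bigraded Hilbert function of $(G_1,\dots,G_7)$ directly --- for instance by exhibiting its (necessarily Hilbert--Burch) syzygies on the seven generators --- and matching it with the Hilbert function of $Y$; but that route is longer, and I would resort to it only if the quoted result did not apply verbatim.
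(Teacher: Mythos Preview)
Your proposal is correct and rests on the same external input as the paper: the ACM property of the completion $Y$ and the structure of its minimal generators, both imported from \cite{Gu2,GVT}. The paper's proof is essentially a citation---it invokes \cite[Theorem~4.8]{GVT} to certify that $Y$ is ACM, then \cite[Theorem~4.11]{GVT} to read off the bidegrees of the minimal generators of $I(Y)$ from the tuple $\alpha_Y$, and finally observes that the listed $G_i$ lie in $I(Y)$ and have exactly those bidegrees. Your route differs in the second half: rather than quoting the bidegree list, you use the (equivalent) consequence that the generators are products of the ruling forms, and then carry out an explicit divisibility-minimal enumeration via the symmetry reduction and the four-variable fixed-point system. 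This is longer but more transparent, since it explains combinatorially why exactly seven generators appear, whereas the paper's proof leaves that computation inside the cited theorem. Both arguments share the same genuine dependency---the ACM structure of completions from \cite{GVT}---which you correctly flag as the only non-elementary step.
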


\begin{proof}  For the reader's convenience, we sketch the main ideas.
As described at the beginning of section 3 of \cite{GVT},
we can associate to $Y$ two tuples $\alpha_Y$ and $\beta_Y$.
In our case, the tuples $\alpha_Y$ and $\beta_Y$ will satisfy the condition
of \cite[Theorem 4.8]{GVT} for $Y$ to be Cohen-Macaulay.  As a consequence,
\cite[Theorem 4.11]{GVT} gives us the degrees of the minimal
generators of $I(Y)$ directly from $\alpha_Y$.  Each of the forms listed
in the lemma belong to $I(Y)$ since they vanish at each point
with the correct multiplicity.  Furthermore, their degrees correspond
to the degrees of the minimal generators of $I(Y)$, so they form
our minimal set of generators.
\end{proof}

\begin{lemma} \label{gensWandXspecial}
Let $X$ be a finite reduced ACM subscheme in $\popo$ with
partition $\lambda = (\underbrace{a,\ldots,a}_{t},a-1)$.
Let $Z$ denote the subscheme defined by $I(X)^{(3)}$, and let $Y$ and
$W$ be the schemes defined as above. Then
\[I(W) = I(Y) + (H_1^2\cdots H_{t}^2 V_1^3 \cdots V_{a-1}^3V_{a},
H_1^3\cdots H_{t}^3H_{t+1}V_1^2 \cdots V_{a-1}^2),\]
and
\[I(X)^{(3)} = I(Z) = I(W) + (H_1^3\cdots H_{t}^3V_1^3\cdots V_{a-1}^3).\]
\end{lemma}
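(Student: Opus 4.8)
The plan is to establish the two stated equalities by the same bootstrapping strategy outlined just before the lemma: first pin down $I(W)$ in terms of $I(Y)$, then pin down $I(Z)=I(X)^{(3)}$ in terms of $I(W)$. In both steps the inclusion ``$\supseteq$'' is easy and the inclusion ``$\subseteq$'' is the point; I would prove ``$\subseteq$'' by showing that every generator $F$ of the larger ideal that is not already visibly in the smaller ideal can be divided by one of the new generators, with quotient lying in an appropriate colon ideal. Concretely, $I(W) = I(Y) \cap \bigcap_{P\in C\setminus X} I(P)$ and $C\setminus X=\{P\}$ with $P = H_{t+1}\cap V_a$, so $I(W) = I(Y)\cap I(P) = I(Y)\cap (H_{t+1},V_a)$; since $I(Y)$ is $(H_{t+1},V_a)$-primary at $P$ with multiplicity $2$ there, I must show that adjoining the two displayed forms — call them $G_8 = H_1^2\cdots H_t^2 V_1^3\cdots V_{a-1}^3 V_a$ and $G_9 = H_1^3\cdots H_t^3 H_{t+1} V_1^2\cdots V_{a-1}^2$ — exactly drops the multiplicity at $P$ from $2$ to $1$ while changing nothing elsewhere.

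For the containment $I(Y)+(G_8,G_9)\subseteq I(W)$: each of $G_8, G_9$ vanishes to order $\ge 3$ at every point of $X$ (so lies in $I(X)^{(3)}$, since each point of $X$ lies on $H_1\cdots H_t$ or $V_1\cdots V_{a-1}$ to the appropriate multiplicity, using that the points of $X$ on $H_{t+1}$ avoid $V_a$ because $\lambda_{t+1}=a-1$) and vanishes to order exactly $1$ at $P$, hence lies in $I(W)$; and $I(Y)\subseteq I(W)$ by construction. For the reverse containment I would argue at the level of bigraded Hilbert functions: $I(W)$ and $I(Y)+(G_8,G_9)$ agree away from $P$ and at $P$ both cut out the reduced point, so the two ideals define the same scheme; since both are saturated (being intersections of primary ideals / ideals of ACM schemes), they coincide. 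Alternatively, one runs the completion/separator machinery of \cite{GVT,Gu2} directly: $W$ is again ACM, its $\alpha_W,\beta_W$ tuples are computed from those of $Y$ by the single-point modification at $P$, and \cite[Theorem 4.11]{GVT} then hands back generator degrees matching $G_1,\dots,G_7$ together with $G_8, G_9$, which is exactly the displayed list.

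The second equality is entirely analogous: $I(Z) = I(X)^{(3)} = I(W)\cap \bigcap_{P\in C\setminus X} I(P)^{\,3}\!/\!I(P) \cdots$ — more precisely $I(Z)$ is obtained from $I(W)$ by raising the multiplicity at $P$ from $1$ back to $0$, i.e. by removing the point $P$ entirely, so I expect a single new generator, $G_{10} = H_1^3\cdots H_t^3 V_1^3\cdots V_{a-1}^3$, which vanishes to order $\ge 3$ on all of $X$ (again because every point of $X$ sits on $t$ of the $H_i$'s or on $a-1$ of the $V_j$'s) and does not vanish at $P$. One checks $G_{10}\in I(Z)$ and $I(W)\subseteq I(Z)$ for the easy inclusion, and for the hard inclusion argues as before that $I(W)+(G_{10})$ is saturated and defines the same scheme $Z$, or invokes the ACM generator-degree theorem once more. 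The main obstacle, in both steps, is the reverse inclusion: verifying that the enlarged ideal is already saturated and that no further generators are needed — equivalently, that the Hilbert function of $I(W)+(G_8,G_9)$ (resp. $I(W)+(G_{10})$) matches that of $I(W)$ (resp. $I(Z)$) in every bidegree. I would handle this by the $\alpha/\beta$-tuple bookkeeping of \cite{GVT}, since there the passage from a $0$-dimensional ACM scheme to the scheme obtained by bumping the multiplicity at one point by one is exactly the controlled modification whose effect on generators is spelled out, so the only real work is to check that $Y$, $W$, and $Z$ all satisfy the Cohen–Macaulayness criterion of \cite[Theorem 4.8]{GVT} and to read off that the new generator degrees are precisely $\deg G_8,\deg G_9$ and $\deg G_{10}$.
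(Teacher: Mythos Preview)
Your strategy is in the right spirit but misses the paper's key tool: \emph{separators}. The paper defines a separator of $P$ of multiplicity $m$ in a fat-point scheme $Z$ as an element of $I(Z')\setminus I(Z)$ where $Z'$ is obtained from $Z$ by lowering the multiplicity at $P$ by one, and a \emph{minimal} set of separators is a minimal generating set for $I(Z')/I(Z)$. This is precisely the gadget for the two ``drop the multiplicity at $P$ by one'' steps you are trying to carry out, and \cite[Theorem~3.4]{GVT2} computes minimal separators directly for ACM fat-point schemes. For the first step ($Y\to W$) the paper simply applies that theorem to the ACM scheme $Y$, obtaining $G_8,G_9$ as the minimal separators of the double point $P$; no saturation check or Hilbert-function comparison is needed.

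Your alternative route through the $\alpha/\beta$-tuple bookkeeping of \cite{GVT} would require you to verify that $W$ and $Z$ are themselves ACM, which you do not do and which the paper conspicuously avoids. Indeed, for the second step ($W\to Z$) the paper does \emph{not} invoke ACM-ness of $W$. Instead it argues that any separator $G$ of the reduced point $P$ in $W$ is also a separator of $P$ in the two subschemes $W_1$ (the points of $W$ on $H_{t+1}$) and $W_2$ (the points on $V_a$); each of these \emph{is} ACM, so \cite[Theorem~3.4]{GVT2} forces $\deg G\succeq(0,3(a-1))$ and $\deg G\succeq(3t,0)$, hence $\deg G\succeq \deg G_{10}$. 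Combined with $\dim_k(I(Z)/I(W))_{i,j}\le 1$, this shows $G_{10}$ alone generates the quotient. Your saturation argument has the gap you yourself flag (why is the sum $I(Y)+(G_8,G_9)$ saturated?---it is a sum, not an intersection of primaries), and your fallback requires ACM-ness of $W$ and $Z$ that you have not established; the paper's separator-and-restriction argument sidesteps both issues.
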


To prove this lemma, we introduce the notion of a separator:

\begin{definition}
Let $Z = m_1P_1 + \cdots + m_iP_i + \cdots + m_sP_s$ be the subscheme in $\popo$
defined by the ideal $\cap_j I(P_j)^{m_j}$.  We say that $F$ is a {\it separator of the
point $P_i$ of multiplicity $m_i$} if $F \in I(P_i)^{m_i-1}
\setminus I(P_i)^{m_i}$ and $F \in I(P_j)^{m_j}$ for all $j \neq
i$.  A set $\{F_1,\ldots,F_p\}$ is a set of {\it minimal separators of
$P_i$ of multiplicity $m_i$} if $I(Z')/I(Z) =
(\overline{F}_1,\ldots,\overline{F}_p)$, and there does not exist
a set $\{G_1,\ldots,G_q\}$ with $q < p$ such that  $I(Z')/I(Z) =
(\overline{G}_1,\ldots,\overline{G}_q)$.  Here,
$Z' =  m_1P_1 + \cdots + (m_i-1)P_i + \cdots + m_sP_s$.
\end{definition}

We now return to the proof of Lemma \ref{gensWandXspecial}.

\begin{proof}
Let $P$ denote the point given by $H_{t+1} \cap V_{a}$.  In
particular, the only point of $Y$ which is a double point is the point
whose support is $P$.  Because $Y$ is ACM, we can use
\cite{GVT2} to compute the minimal separators $P$ of multiplicity $2$.
Indeed, applying \cite[Theorem 3.4]{GVT2}, we find that
$H_1^2\cdots H_{t}^2 V_1^3 \cdots V_{a-1}^3V_{a}$ and
$H_1^3\cdots H_{t}^3H_{t+1}V_1^2 \cdots V_{a-1}^2$ are these minimal separators,
thus proving the first statement about the generators of $I(W)$.

We now wish to show that $I(W)+ (H_1^3\cdots H_{t}^3V_1^3\cdots V_{a-1}^3)
= I(X)^{(3)}$.  Note that $F =H_1^3\cdots H_{t}^3V_1^3\cdots V_{a-1}^3$ is a
separator of $P$ of multiplicity $1$.  To complete the proof,
we need to show that this form is the only minimal separator
of $P$ of multiplicity $1$.  So, suppose that $G$ is some
other separator of $P$ of multiplicity $1$.

Let $W_1$ consist
of the subscheme of $W$ which contains all the points on the ruling
$H_{t+1}$, and let $W_2$ consist of all the points on the ruling $V_a$.
So, $W_1$ consists of $a-1$ triple points and one reduced point, while
$W_2$ contains $t$ triple points and one reduce point.  The separator
$G$ is also a separator of this point of multiplicity $1$ in
both these schemes.

Now, $W_1$, respectively $W_2$, is ACM, so by \cite[Theorem 3.4]{GVT2},
it will have a unique, up to scalar,
minimal separator of degree $(0,3(a-1))$, respectively, $(3t,0)$.  Because
$G$ is a separator of $P$ of multiplicity $1$ in both these
schemes, we must have $\deg G \succeq (0,3(a-1))$ and $\deg G\succeq (3t,0)$.
So, $\deg G \succeq (3t,3(a-1))$.  So, $\deg G \succeq \deg F$.
But because $\deg Z = \deg W -1$, we must have that
$\dim_k (I(W)/I(Z))_{i,j} = 0$ or $1$ for all $(i,j)$.    We have shown
that if $G \in I(W) \setminus I(Z)$, we must have $\deg G \succeq \deg F
= (3t,3(a-1))$.  Thus we have $(I(W)/I(Z))$ is principally generated
by $\overline{F}$ in $R/I(Z)$, i.e., $F$ is the only
minimal separator of $P$ of multiplicity $1$.  This gives the desired
conclusion.
\end{proof}

We can now prove the following special case of Conjecture \ref{conjecture}.

\begin{theorem}\label{examplesofI^3}
Let $X$ be a finite reduced ACM subscheme in $\popo$.  Suppose
that the partition $\lambda$ associated to $X$ has the form
$\lambda = (\underbrace{a,a,\ldots,a}_t,a-1)$. Then $I(X)^{(3)} =
I(X)^3$.
\end{theorem}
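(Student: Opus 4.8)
The plan is to prove the theorem by a direct comparison of generators: I will write down an explicit generating set of $I(X)^{(3)}$, write down the obvious generating set of $I(X)^3$, and observe that the two sets coincide up to nonzero scalars. Since the containment $I(X)^3\subseteq I(X)^{(3)}$ is automatic, this simultaneously gives the reverse containment, hence equality. All of the substantive work is already packaged into Lemmas \ref{gensYspecial} and \ref{gensWandXspecial}; what remains is bookkeeping with exponents.

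First I would assemble the generators of $I(X)^{(3)}$. Put
$S_1=H_1^2\cdots H_t^2V_1^3\cdots V_{a-1}^3V_a$,
$S_2=H_1^3\cdots H_t^3H_{t+1}V_1^2\cdots V_{a-1}^2$, and
$S_3=H_1^3\cdots H_t^3V_1^3\cdots V_{a-1}^3$.
By Lemma \ref{gensYspecial} we have $I(Y)=(G_1,\dots,G_7)$, and by Lemma \ref{gensWandXspecial} we have $I(W)=I(Y)+(S_1,S_2)$ and $I(X)^{(3)}=I(Z)=I(W)+(S_3)$. Hence
\[
I(X)^{(3)}=(G_1,\dots,G_7,S_1,S_2,S_3),
\]
a generating set with ten elements (minimality is irrelevant here).

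Next I would write down $I(X)^3$. By Lemma \ref{gensofideal}, for $\lambda=(\underbrace{a,\dots,a}_t,a-1)$ we have $I(X)=(g_1,g_2,g_3)$ with $g_1=H_1\cdots H_{t+1}$, $g_2=H_1\cdots H_tV_1\cdots V_{a-1}$, and $g_3=V_1\cdots V_a$, so $I(X)^3$ is generated by the ten products $g_1^ig_2^jg_3^k$ with $i+j+k=3$. Expanding each product and collecting exponents, one checks
\[
g_1^3=G_7,\quad g_1^2g_2=G_6,\quad g_1^2g_3=G_5,\quad g_1g_2g_3=G_4,\quad g_1g_3^2=G_3,
\]
\[
g_2g_3^2=G_2,\quad g_3^3=G_1,\quad g_1g_2^2=S_2,\quad g_2^3=S_3,\quad g_2^2g_3=S_1.
\]
Thus the ten monomial generators of $I(X)^3$ are exactly the ten generators of $I(X)^{(3)}$ produced above, so $I(X)^3=I(X)^{(3)}$, as claimed.

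I do not expect a genuine obstacle along this route, since the real analysis — identifying the generators of the completion $Y$ and of $W$, and showing $H_1^3\cdots H_t^3V_1^3\cdots V_{a-1}^3$ is the unique minimal separator of the last point — is already done in the two lemmas. The only thing to be careful about is verifying each of the ten exponent identities correctly; a convenient shortcut is to set $\mathbf H=H_1\cdots H_t$ and $\mathbf V=V_1\cdots V_{a-1}$, so that $g_1=\mathbf HH_{t+1}$, $g_2=\mathbf H\mathbf V$, $g_3=\mathbf VV_a$, which makes every product above transparent. One should also note explicitly that the argument only needs the $G_i$ and $S_j$ to generate $I(X)^{(3)}$, which is exactly what Lemmas \ref{gensYspecial} and \ref{gensWandXspecial} assert.
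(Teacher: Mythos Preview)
Your proposal is correct and follows exactly the approach the paper takes: it invokes Lemmas \ref{gensYspecial} and \ref{gensWandXspecial} to produce generators of $I(X)^{(3)}$ and then checks that each lies in $I(X)^3$. You have simply made explicit the ``exercise'' the paper leaves to the reader, and in fact you observe a little more, namely that the ten cubic products $g_1^ig_2^jg_3^k$ coincide on the nose with $G_1,\dots,G_7,S_1,S_2,S_3$, so the two generating sets are literally equal.
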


\begin{proof}
The generators of $I(X)^{(3)}$ are given in
Lemmas \ref{gensYspecial} and \ref{gensWandXspecial}; an
exercise shows that each generator
belongs to $I(X)^3 =(H_1\cdots H_{t+1},H_1\cdots H_tV_1\cdots V_{a-1},V_1\cdots V_a)^3$.
\end{proof}

\subsection{$\lambda$ has at least three distinct entries}
One strategy to show that the converse of Conjecture \ref{conjecture}
holds is
to show that if  $\lambda$ has three or more distinct entries, then
$I(X)^{(3)} \neq I(X)^3$.  While we have not been able to prove this statement
in general, we conclude with some infinite families that exhibit this
behavior.

\begin{theorem}\label{nonacmexample}
Let $X$ be a finite reduced ACM subscheme in $\popo$.  Suppose
that the partition $\lambda$ associated to $X$ has
either of the two forms:
\begin{enumerate}
\item[$(i)$] $ \lambda = (\lambda_1,\ldots,\lambda_{t-3},3,2,1)
~~\mbox{with $\lambda_i \geq t-i+1$ for $i=1\ldots,t-3$}$; or
\item[$(ii)$] $\lambda = (\underbrace{t,t,\ldots,t}_m,t-1,t-2,\ldots,5,4,3,2,1) ~~\mbox{
for some integer $t \geq 3$}.$
\end{enumerate}
Then $I(X)^{(3)} \neq I(X)^3$.
\end{theorem}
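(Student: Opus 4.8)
In both cases it suffices, by the same reasoning as in Example~\ref{sixptsexample} (or by the containment $I(X)^3\subseteq I(X)^{(3)}$), to produce a single form $G\in I(X)^{(3)}\setminus I(X)^3$. Since $X$ is reduced we have $I(X)^{(3)}=\bigcap_{P\in X}I(P)^3$, so ``$G\in I(X)^{(3)}$'' simply means that $G$ vanishes to order $3$ at every point of $X$. The plan is to produce such a $G$ by an explicit (multi)degree argument in case $(i)$, and by an explicit description of the minimal generators of $I(X)^{(3)}$ in case $(ii)$.

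\textbf{Case $(i)$.} First I would pin down $\alpha(I(X))$. By Lemma~\ref{gensofideal} the minimal generators of $I(X)$ are $H_1\cdots H_t$ (of degree $t$, since $\lambda$ has $t$ parts), $V_1\cdots V_v$ (of degree $v=\lambda_1\ge t$), and the mixed generators $H_1\cdots H_iV_1\cdots V_{\lambda_{i+1}}$ of degree $i+\lambda_{i+1}$; the hypothesis $\lambda_i\ge t-i+1$ forces $i+\lambda_{i+1}\ge t$ for every relevant $i$ (and equality $=t$ along the tail $i=t-3,t-2,t-1$). Hence $\alpha(I(X))=t$, so $\alpha\big(I(X)^3\big)=3t$ and every nonzero element of $I(X)^3$ has total degree $\ge 3t$. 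Now set
\[
G=(H_1\cdots H_{t-3})^3\cdot H_{t-2}^2\,V_1^2\,H_{t-1}\,V_2\,F,
\]
where $F$ is any nonzero form of bidegree $(1,1)$ vanishing at the three points $(H_{t-2},V_3),(H_{t-1},V_2),(H_t,V_1)$; such an $F$ exists because $\dim_k R_{(1,1)}=4$ and we impose only three linear conditions. Then $G$ has bidegree $(3(t-3)+4,\,4)=(3t-5,4)$, hence total degree $3t-1<3t$, and $G\neq0$. Finally $G$ vanishes to order $3$ at every point of $X$: points on $H_1,\dots,H_{t-3}$ are killed to order $3$ by the cube $(H_1\cdots H_{t-3})^3$, while the six tail points on $H_{t-2},H_{t-1},H_t$ (which, since $\lambda_{t-2}=3,\lambda_{t-1}=2,\lambda_t=1$, all lie in the columns $V_1,V_2,V_3$) are handled by $H_{t-2}^2V_1^2H_{t-1}V_2F$ exactly as in Example~\ref{sixptsexample}. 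So $G\in I(X)^{(3)}\setminus I(X)^3$. (For $t=3$ this reproduces Example~\ref{sixptsexample} verbatim, the cube being the empty product.)

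\textbf{Case $(ii)$.} Here the degree argument of case $(i)$ is unavailable: for $m\ge2$ a restriction‑to‑rulings (``peeling'') argument shows that any form vanishing to order $n$ on $X$ has total degree $\ge nt$, i.e.\ $\alpha\big(I(X)^{(n)}\big)=nt=\alpha\big(I(X)^n\big)$ for all $n$, so $I(X)^{(3)}$ and $I(X)^3$ agree in low degree. Instead I would compute a minimal generating set of $I(X)^{(3)}$ directly, following the pattern of Lemmas~\ref{gensYspecial} and~\ref{gensWandXspecial}: let $Z$ be the scheme defined by $I(X)^{(3)}$, form its completion $Y$ relative to the smallest complete‑intersection grid $C$ with $X\subseteq C$, use the ACM/Hilbert‑function machinery of \cite{GVT} to read the generators of $I(Y)$ off of the partition, and then descend from $I(Y)$ to $I(X)^{(3)}$ by repeatedly computing minimal separators (of multiplicities $2$ and then $1$) of the points of $C\setminus X$, as in \cite{GVT2}. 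Among the resulting generators of $I(X)^{(3)}$ there is one sitting at the ``deepest corner'' of the staircase of $3X$, of the shape $H_1^3\cdots H_{a}^3V_1^3\cdots V_{b}^3$ for suitable $a,b$ coming from $\lambda$; I would then compare its bidegree with the bidegrees of the products $g_{p}g_{q}g_{r}$ generating $I(X)^3$ (the $g_i$ being the generators of $I(X)$ from Lemma~\ref{gensofideal}) and, using unique factorisation in $R$ (or a convenient term order), check that this generator is not a polynomial combination of the $g_pg_qg_r$, giving $I(X)^{(3)}\neq I(X)^3$.

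\textbf{Where the difficulty lies.} Case $(i)$ is, after the computation of $\alpha$, essentially one line once the curve $G$ is written down. The real work is case $(ii)$: the completion/separator bookkeeping for a partition with a long descending tail on top of $m$ equal parts is intricate, and the crux is to identify precisely which corner of the staircase of $3X$ produces a minimal generator of $I(X)^{(3)}$ that escapes $I(X)^3$, and to certify non‑membership in $I(X)^3$ cleanly (rather than by a brute‑force Gröbner computation). I expect this identification — and handling the two sub‑families of case $(ii)$ uniformly in $t$ and $m$ — to be the main obstacle.
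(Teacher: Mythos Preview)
Your treatment of case $(i)$ is correct and essentially identical to the paper's: the same form (up to notation), the same computation $\alpha(I(X))=t$ from Lemma~\ref{gensofideal}, and the same total-degree obstruction $3t-1<3t$.

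For case $(ii)$, however, you abandon the explicit-form strategy too quickly and propose something far more elaborate than what is needed. The paper does \emph{not} invoke the completion/separator machinery here at all. It writes down the obvious analogue of the form from case $(i)$,
\[
F = H_1^3\cdots H_{m+t-4}^3\,H_{m+t-3}^2\,H_{m+t-2}\,V_1^2\,V_2\,D,
\]
with $D$ a $(1,1)$-form through the three ``staircase'' points $H_{m+t-3}\cap V_3$, $H_{m+t-2}\cap V_2$, $H_{m+t-1}\cap V_1$; this has bidegree $(3m+3t-8,\,4)$ and lies in $I(X)^{(3)}$ for exactly the same local reasons as before. You are right that the \emph{total}-degree argument fails for $m\ge 2$, but the paper simply replaces it by a \emph{bidegree} argument: by Lemma~\ref{gensofideal} the generators of $I(X)$ have bidegrees $(m+t-1,0),(m+t-2,1),\ldots,(m,t-1),(0,t)$, so any product $G_p G_q G_r$ with second coordinate $\le 4$ comes from a short explicit list of triples, and for each of these the first coordinate exceeds $3m+3t-8$. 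Hence $(I(X)^3)_{(3m+3t-8,4)}=0$ while $F\neq 0$ lives there, and $I(X)^{(3)}\neq I(X)^3$.

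So the ``main obstacle'' you anticipate does not exist: case $(ii)$ is just as short as case $(i)$ once you look at bidegree instead of total degree. Your proposed route via $I(Y)$ and iterated separators might eventually succeed, but it is much heavier, is only sketched, and the specific ``corner'' generator $H_1^3\cdots H_a^3 V_1^3\cdots V_b^3$ you single out is unlikely to be the witness (such pure-ruling products tend to lie in $I(X)^3$); the genuine witness, as in case $(i)$, involves the diagonal $(1,1)$-form $D$.
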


\begin{proof}
Suppose that $\lambda$ has the form in $(i)$.
Consider the bi-homogeneous form
$$F = H_1^3H_2^3\cdots H_{t-3}^3H_{t-2}^2H_{t-1}^1V_1^2V_2D$$
where $H_i$ and $V_j$ correspond to the horizontal and vertical rulings that
minimally contain $X$, and $D$ is the degree $(1,1)$-form that vanishes at
the three points $H_{t-2} \cap V_3, H_{t-1} \cap V_2$, and $H_t \cap V_1$.
Then $F \in I(X)^{(3)}$ since $F$ vanishes at each of the points with
multiplicity at least three.  In addition, the bidegree of $F$
is $(3t-5,4)$, so the total degree of $F$ is $3t-1$.

By Lemma \ref{gensofideal}, the
bidegrees of the generators of $I(X)$
are $(t,0), (0,\lambda_1)$ and $(i,\lambda_{i+1})$ whenever
$\lambda_{i+1} - \lambda_i < 0$.  But by our hypotheses,
$\lambda_i \geq t - i +1$ for all $i$, thus all generators of $I(X)$
have total degree at least $t$, so $I(X)^3$ has no nonzero elements
of total degree less than $3t$, whence $F \in I(X)^{(3)} \setminus I(X)^3$.

Now assume $(ii)$.
The form
$F = H_1^3H_2^3\cdots H_m^3H_{m+1}^3 \cdots H_{m+t-4}^3H_{m+t-3}^2H_{m+t-2}V_1^2V_2D$
vanishes at each of the points with multiplicity at least three;  again
$D$ is a $(1,1)$-form that vanishes at
the three points $H_{m+t-3} \cap V_3, H_{m+t-2} \cap V_2$, and $H_{m+t-1} \cap V_1$.
The form $F$ has bidegree $(3m+3t-8,4)$.

By Lemma \ref{gensofideal},
$I(X)$ has $t+1$ generators, say $G_0,\ldots,G_t$
with $\deg G_i = (t+m-i,i)$ for $i = 0,\ldots,t-1$, and $\deg G_t = (0,t)$.
If $F \in I(X)^3$, then there exists some non-negative integer solution
to $a_0 + a_1 + \cdots + a_t = 3$ such that
$(3m+3t-8,4) \succeq \deg G_0^{a_0}G_1^{a_1}\cdots G_t^{a_t}$.
This expression implies that our non-negative integer solution to
$a_0 + a_1 + \cdots + a_t = 3$ must also satisfy
\[0a_0 +1a_1 + 2a_2 + \cdots + ta_t\leq 4.\]
We can write out all such solutions:
\begin{eqnarray*}
(3,0,0,0,0,0,\ldots,0), & (0,3,0,0,0,0,\ldots,0), & (2,1,0,0,0,0,\ldots,0), \\
(2,0,1,0,0,0,\ldots,0), & (2,0,0,1,0,0,\ldots,0), & (2,0,0,0,1,0,\ldots,0), \\
(1,2,0,0,0,0,\ldots,0), & (1,0,2,0,0,0,\ldots,0), & (1,1,1,0,0,0,\ldots,0), \\
& (1,1,0,1,0,0,\ldots,0).&
\end{eqnarray*}
However, for any such solution, $(3m+3t-8,4) \not\succeq
\deg G_0^{a_0}G_1^{a_1}\cdots G_t^{a_t}$ because the first coordinate of
$\deg G_0^{a_0}G_1^{a_1}\cdots G_t^{a_t}$ will be larger than $3m+3t-8$.
So, $I(X)^3$ will be empty in bidegree $(3m+3t-8,4)$, but $I(X)^{(3)}$
is not empty.  This implies the desired conclusion.
\end{proof}

\begin{remark} Theorem \ref{nonacmexample}
generalizes Example \ref{sixptsexample},
which has $\lambda =(3,2,1)$.
\end{remark}


\end{document}